\newcommand{\R}{\mathbb R}
\newcommand{\eps}{\epsilon}
\newtheorem{theorem}{Theorem}
\newtheorem{lemma}{Lemma}[section]
\newtheorem{proposition}{Proposition}[section]
\newtheorem{remark}{Remark}[section]
\newtheorem{defi}{Definition}[section]
\newtheorem{acknowledgment*}{Acknowledgment}
\newtheorem{assumption}{Assumption}[section]
\newcommand{\be}{\begin{equation}}
\newcommand{\ee}{\end{equation}}
\begin{document}

\setcounter{page}{1}
\setlength{\baselineskip}{1.3\baselineskip}
\thispagestyle{empty}

\Large
\begin{center}{\bf From optimal transportation to optimal teleportation} \end{center}
\normalsize
\begin{center}  G. Wolansky\footnote{gershonw@math.technion.ac.il, Tel +972525236410} \\
Department of Mathematics, Technion, Haifa 32000, israel\end{center}
\begin{center}\today\end{center}

\begin{abstract}
The object of this paper is to study estimates of $\eps^{-q}W_p(\mu+\eps\nu, \mu)$ for small $\eps>0$. Here  $W_p$ is the  Wasserstein metric on positive measures, $p>1$, $\mu$ is a probability measure and $\nu$ a signed, neutral  measure ($\int d\nu=0$). In [W1] we proved uniform (in $\eps$) estimates for $q=1$ provided $\int \phi d\nu$ can be controlled in terms of  $\int|\nabla\phi|^{p/(p-1)}d\mu$, for any smooth function $\phi$.
\par
In this paper we extend the results to the case where such a control fails. This is the case where if, e.g. $\mu$ has a disconnected support, or if the dimension of $\mu$ , $d$ (to be defined) is larger or equal $p/(p-1)$.
\par
 In the latter  case we get   such an estimate  provided $1/p+1/d\not=1$ for  $q=\min(1, 1/p+1/d)$. If   $1/p+1/d=1$ we get a log-Lipschitz estimate.
\par
As an  application we obtain H\"{o}lder estimates in $W_p$  for curves of probability measures which are absolutely continuous in the total variation norm .
\par
In case the support of $\mu$ is disconnected (corresponding to $d=\infty$) we obtain sharp estimates for  $q=1/p$ ("optimal teleportation"):
$$ \lim_{\eps\rightarrow 0}\eps^{-1/p}W_p(\mu, \mu+\eps\nu) = \|\nu\|_{\mu}$$
where $\|\nu\|_{\mu}$ is expressed in terms of optimal transport on a metric graph, determined only by the relative distances between the connected components of the support of $\mu$, and the weights of the measure $\nu$ in each connected component of this support.

\end{abstract}\
\newpage
\section{Introduction}
\subsection{Notation}
\begin{itemize}
\item
$\Omega\subset\R^k$ is  a compact set, equal to the closure of its interior.
\item ${\cal M}:={\cal M}(\Omega)$  is the set of  Borel measures on $\Omega$. ${\cal M}_+$ is the set of non-negative measures in ${\cal M}$. ${\cal M}_1$ is the set of probability (normalized) measures in ${\cal M}_+$.
\item The duality between ${\cal M}(\Omega)$ and $C(\Omega)$ (continuous functions) is denoted by $\langle\mu,\phi\rangle$, where $\mu\in{\cal M}$ and $\phi\in C(\Omega)$. This duality implies an order relation on ${\cal M}$: $\mu_1\geq \mu_2$ iff $\langle\mu_1,\phi\rangle \geq \langle\mu_2,\phi\rangle$ for any  non-negative $\phi\in C(\Omega)$.
\item For $\mu\in{\cal M}_+$, $\text{supp}(\mu)$ is the minimal closed set $A\subset\Omega$ such that $\mu(A)=\mu(\Omega)$.
\item If $\mu\in{\cal M}_+$ then $|\mu|:= <\mu, 1>$ (the "mass" of $\mu$).
\item For $\nu\in{\cal M}$, $\nu_\pm\in {\cal M}_+$ is the  factorization of $\nu$ into positive and negative parts, namely $\nu=\nu_+-\nu_-$   such that $\|\nu\|_{TV}:= |\nu_+|+  |\nu_-|$ is the total variation norm of $\nu$ (in particular, $\nu_\pm$ are mutually singular).
    \item
     ${\cal M}_0$ is  the set of measures $\nu=\nu_+-\nu_-$ where $\nu_\pm\in{\cal M}_+$ and $|\nu_-|=|\nu_+|$. In particular, for any $\nu\in{\cal M}_0$ there exists   a single factorization  $\nu_\pm$.
\end{itemize}
\subsection{Background}
Recall
the definition of the $p-$Wasserstein metric ($p>1$) on  ${\cal M}_1(\Omega)$:
\be\label{wasser} W_p(\mu_1, \mu_2):=\left( \inf_{\pi\in\Pi(\mu_1, \mu_2)}\int_\Omega\int_\Omega |x-y|^p \pi(dxdy)\right)^{1/p}\ee
where
$\mu_0, \mu_1\in {\cal M}_1$,
\be\label{Pidef} \Pi(\mu_1, \mu_2):= \left\{ \pi\in {\cal M}_1(\Omega\times\Omega) \ ; \ \ \pi_{\#1}=\mu_1; \ \ \pi_{\#2}=\mu_2\right\}\ee
Here $\pi_{\#, 1,2}$ represents the first and second marginals of $\pi$ on $\Omega$, respectively.

 The $(C(\Omega))^*$ topology restricted to  ${\cal M}_1$ can be metrized   by  $W_p$ with  $p\geq 1$ ([V1], Theorem 6.9). See also [V], [K], [Ve], [T], [KR],  [Va] among many other sources for this and related metrics.

$W_p$ can be trivially  extended   to any  pair $\mu_1, \mu_2\in{\cal M}_+$ provided $|\mu_1|=|\mu_2|$. This extension is defined naturally by the homogeneity relation
\be\label{recallH} W_p(\alpha\mu_1, \alpha\mu_2)=\alpha^{1/p}W_p(\mu_1, \mu_2)\ee
for $\alpha > 0$.
\par
Note that the total variation  of  $\nu=\nu^+-\nu_-\in {\cal M}_0$ is given by
 $$ \|\nu\|_{TV}= \inf_{\pi\in\Pi(\nu_+, \nu_-)}\int_\Omega\int_\Omega d(x,y) \pi(dxdy)$$
where $d$ is the discrete metric ($d(x,y)=1$ if $x\not= y$, $d(x,x)=0$), see [V1], Theorem 6.15. Since $|x-y|^p<Diam^p(\Omega)d(x,y)$ for any $x,y$ in the compact set $\Omega$, then
\be\label{TV0}W_p(\nu_+, \nu_-)\leq Diam(\Omega)\left\|\nu\right\|^{1/p}_{TV} \ , \ee
hence, by the {\it principle of monotone additivity} (see Proposition \ref{ma} below) and (\ref{recallH}),
\be\label{TV}\eps^{-1/p}W_p(\mu+\eps\nu_+, \mu+\eps\nu_-) \leq Diam(\Omega)\|\nu\|^{1/p}_{TV}\ee
for any $\eps>0$, provided  $\mu\in{\cal M}_+$.
\par
Lemma 5.6 in [W1] (see also Theorem 7.26 in [V]) implies that for any $\nu=\nu_+-\nu_-\in{\cal M}_0$, $\nu_\pm\in{\cal M}_+$ and any probability measure $\mu$
\be\label{lemma1} \liminf_{\eps\searrow 0} \eps^{-1}W_p(\mu+\eps\nu_+, \mu+\eps\nu_-)\geq \sup_{\phi\in {\cal B}_p(\mu)} \langle\nu, \phi\rangle\ee
where, if $p>1$,
\be\label{lemma2} {\cal B}_p(\mu):= \left\{ \phi\in C^1(\Omega); \ \int_\Omega|\nabla\phi|^{p/(p-1)} d\mu \leq 1\right\}\ee
while Lemma 5.7 establishes the opposite inequality  for $\limsup$ in (\ref{lemma1}) (in particular, the existence of a limit), if $\nu$ is absolutely continuous with respect to $\mu$ and both measures are regular enough.
\begin{remark}
Note that for $p=1$ an equality
$$\eps^{-1}W_1(\mu+\eps\nu_+, \mu+\eps\nu_-)= \sup_{\phi\in {\cal B}_1} \langle\nu,\phi\rangle$$
 holds for any $\eps>0$ where ${\cal B}_1$ is the set of $1-$ Lipschitz  functions on $\Omega$.
\end{remark}
In the  cases where there is equality in  (\ref{lemma1}) we obtain
 \be\label{modT}\liminf_{\eps\searrow 0} \eps^{-1}W_p(\mu+\eps\nu_+, \mu+\eps\nu_-)\leq D_p(\mu)\|\nu\|_{TV}\  \ee
 where
 $$ D_p(\mu):=\sup_{\phi\in {\cal B}_p(\mu)}\left(\sup_{x\in \text{supp}(\mu)} \phi(x)-\inf_{x\in \text{supp}(\mu)}\phi(x) \right) \ $$
 is the maximal oscillation  of functions in ${\cal B}_p(\mu)$ restricted to supp($\mu$) and is, of course,  independent of $\nu$. \par
 In this paper we consider the case $D_p(\mu)=\infty$.

 \subsection{Measures of connected support}
  Suppose $\mu$ is a uniform (Lebesgue) measure on a "nice" domain $\Omega\subset\R^d$ (e.g. a ball in $\R^d$). Then ${\cal B}_p(\mu)$ is dense in the unit ball of   the Sobolev space $\mathbb{W}^{1,p\prime}(\Omega)/ \R$ (with respect to that norm) where  $p\prime:=p/(p-1)$. Sobolev  embedding theorem then implies that $D_p(\mu)<\infty$ if $d<p\prime$ (where $\mathbb{W}^{1,p}(\Omega)$ is embedded in $C(\Omega)$), while $D_p(\mu)=\infty$ if $p\prime\leq d$ (see Remark \ref{critsob}).
  \par
   The first result (Theorem \ref{firstT} ) deals with measures $\mu$ of connected support. We introduce the notion of dimensionality
  of measure and define  $d-$connected property of such measures  in  Definitions \ref{dimensia} and \ref{strongdom}.
  \par
   For strong $d-$ connected measure $\mu$ and under the assumption that the support of $\nu$ is contained in the support of $\mu$ we state the existence of a  constant $C$ depending only on $\mu$, and an exponent    $q\in [1/p,1]$ for which
\be\label{modTq}\sup_{\eps> 0} \eps^{-q}W_p(\mu+\eps\nu_+, \mu+\eps\nu_-)\leq C\|\nu\|_{TV}\  \ .  \ee
where
 \be\label{qdef}q=\min(1, 1/d+1/p)  \ \ \text{if} \ \ 1/d+1/p\not= 1  \ .   \ee
The second case $d=p/(p-1)$ (i.e $1/d+1/p=1$) corresponds to the critical Sobolev embedding $\mathbb{W}^{1, p\prime}(\mathbb{R}^d)$
and leads to a  log Lipschitz estimate
\be\label{modTLL}\sup_{\eps>0} \frac{1}{ \eps\ln^{1/p}(1/\eps+1)}W_p(\mu+\eps\nu_+, \mu+\eps\nu_-)\leq C\|\nu\|_{TV}\  \ .  \ee

\subsection{Application:  curves of measures}
 Let $I\subset \R$ be an interval and  $\vec{\mu}\in{\cal M}_+(\Omega\times I)$ such that its $t$ marginal $\mu_{(t)}$ is a probability measure on $\Omega$ for any $t\in I$. Then
  $$ \R\supset I \ni t \mapsto \mu_{(t)}\in {\cal M}_1 \  $$
  can be viewed as a curve in ${\cal M}_1:={\cal M}_1(\Omega)$ parameterized in $I$.
  We say that $\vec{\mu}\in AC^r(I, {\cal M}_1;TV)$ for some $\infty\geq r\geq 1$ if there exists a non-negative  $m\in \mathbb{L}^r(I)$ such that
 $$\| \mu_{(t)}-\mu_{(\tau)}\|_{TV}\leq \int_\tau^t m(s)ds$$
  for any $t>\tau\in I$. \par
  The {\it metric derivative}  [AG1, AGS] of $\vec{\mu}$  with respect to the $TV$ norm is
$$ \vec{\mu}\prime_{(t)}(t):= \lim_{\tau\rightarrow t}\frac{\|\mu_{(t)}-\mu_{(\tau)}\|_{TV}}{|t-\tau|} \ . $$

By    Theorem 1.1.2 in [AGS], the metric derivative exists $t$ a.s. in $I$  for $\vec{\mu}\in AC^r(I,{\cal M}_1;TV)$.
  \par
  On the other hand, ${\cal M}_1$ can also be considered as a metric space with respect to the Wasserstein metric $W_p$.
Recalling  (\ref{TV0}), we observe that, if $\vec{\mu}\in AC^r(I, {\cal M}_1; TV)$ then
$$ W_p(\mu_{(t)}, \mu_{(\tau)})\leq Diam(\Omega)\|\mu_{(t)}-\mu_{(\tau)}\|^{1/p}_{TV}  $$
$$\leq  Diam(\Omega)(\int_\tau^t m)^{1/p}\leq Diam(\Omega)\|m\|_r^{1/p} |t-\tau|^{\frac{r-1}{r}\frac{1}{p}} \ . $$
So, we cannot expect that such a curve $\vec{\mu}\in AC^r(I, {\cal M}_1;TV)$ is  more than \\ $(r-1)/rp-$H\"{o}lder with respect to the Wasserstein metric   $W_p$.

In Theorem \ref{thirdT} we state  that if the support of $\vec{\mu}$ is monotone non-increasing, namely supp($\mu_{(t)}$) $\subseteq$ supp($\mu_{(\tau)}$) for any $t>\tau$,
 and supp($\mu_{(t)}$) is strongly $d-$connected for any $t\in I$   then we can improve this estimate:
 Under the above conditions,    $\vec{\mu}$ is  $q(r-1)/r-$ H\"{o}lder on $I$ in $({\cal M}_1,W_p)$, ($q-$ H\"{o}lder if $r=\infty$) where $q$ given by (\ref{qdef}).

Moreover,  if  $1/p+1/d>1$  then $q=1$ (\ref{qdef})  and  $\vec{\mu}\in AC^r(I, {\cal M}_1;W_p)$ as well. This implies  the existence of a Borel vector field $v\in\mathbb{L}^r\left(I, \mathbb{L}^p(\mu_{(t)})\right)$ for which the continuity equation
  \be\label{ce} \partial_t \mu+\nabla_x\cdot(\mu v)=0 \  \ee
holds as a  distribution [AGS].

To illustrate  the above results, consider
\be\label{mudelta}\mu_{(t)}=m(t)\delta_{x_0}+ (1-m(t))\delta_{x_1}\ee
 where $x_0\not= x_1$ and $t\mapsto m(t)\in(0,1)$ is a non-constant smooth function. Then $\dot{\mu}_{(t)}= \dot{m}(t)(\delta_{x_0}-\delta_{x_1})\in {\cal M}$ and $\|\dot{\mu}_{(t)}\|_{TV}=2|\dot{m}(t)|$.

If we consider the above curve  in $({\cal M}_1,W_p)$ where $p>1$ then  the metric derivative   does not exist. \\ Indeed, since $W^p_p(\mu_{(t)}, \mu_{(\tau)})= |m(t)-m(\tau)|\times |x-x_0|^p$,
all we can obtain is  $1/p$ H\"{o}lder estimate:
$$ \lim_{\tau\rightarrow t}\frac{W_p(\mu_{(t)}, \mu_{(\tau)})}{|t-\tau|^{1/p}} =\lim_{\tau\rightarrow t}\frac{|m(t)-m(\tau)|^{1/p}}{|t-\tau|^{1/p}} |x-x_0|= |\dot{m}|^{1/p}(t) |x-x_0|  \ . $$
Now, replace (\ref{mudelta}) by
\be\label{deltabar} \mu_{(t)}=m(t)\delta_{x_0} + (1-m(t))\delta_{x_1} + \bar{\mu}\ee
(recall (\ref{recallH})) where $\bar{\mu}\in {\cal M}_+$ a stationary (independent of $t$) positive, strongly  $d-$connected measure whose support contains $x_0, x_1$.
Even though $\dot{\mu}=\dot{m}(\delta_{x_0}-\delta_{x_1})$ is the same for both (\ref{mudelta}) and (\ref{deltabar})),   we can find out   that for $\mu$ given by (\ref{deltabar})
$$ \frac{W_p(\mu_{(t)}, \mu_{(\tau)})}{|t-\tau|^{q}} \leq C(\bar{\mu})|\dot{m}|  \  $$
for $q=\min[1, 1/p+1/d]$ (provided $1/p+1/d\not=1$), or the Log-Lipschitz estimate
 $$ \frac{W_p(\mu_{(t)}, \mu_{(\tau)})}{|t-\tau|\ln^{1/p}(1/|t-\tau|)} \leq C(\bar{\mu})  \  $$
 if $1/p+1/d=1$. In particular (\ref{deltabar}) is uniformly  Lipschitz if $1/p+1/d>1$. If this is the case,  it is absolutely continuous in $W_p$. Hence
 the continuity equation (\ref{ce}) is satisfied for some Borel vectorfield $v$ [AGS].

  To elaborate further, let us consider the case
where $\mu_{(t)}$ is supported in an  interval $J\subset\R$ and $[x_0, x_1]\subset  J$:
 $$\mu_{(t)}:= \beta 1_J(dx) +  m(t)\delta_{x_1} + (1-m(t))\delta_{x_0} \  $$
 where $\beta>0$ is a constant.
  Then $\mu_{(\cdot)}$ is strongly $1-$connected (see Definition \ref{strongdom}). Hence  for any $p>1$, $\mu_{(\cdot)}$ is Lipschitz in $W_p$. In particular, it satisfies (\ref{ce}). It can be verified that the transporting vector field is nothing but $$v(x,t)=\beta^{-1}\dot{m}(t) \ \ \text{if} \ \ x_0<x<x_1 ,  \ , \ \ \ v(x_0,t)=v(x_1,t)=0 \ ; \ \ \forall t\in I $$
   and $v$   is arbitrary otherwise .
\par
The case  $q<1$ corresponds, in this context, to a "teleportation": No vector field $v$ exists for which an orbit $\mu_{(t)}$ is transported via the continuity equation (\ref{ce}). In particular, if the support of $\mu$  is disconnected (e.g. $\beta=0$ above).
\subsection{Disconnected support}
In the last  part of the paper  we discuss
 the case of disconnected support of $\mu\in {\cal M}_+$, corresponding to  $d=\infty$. In that case $q=1/p$. Under appropriate condition we state in Theorem \ref{Th2} that there exists   a {\it sharp} limit
 $$ \lim_{\eps\searrow 0} \eps^{-1/p}W_p\left(\mu+\eps\nu_+, \mu+\eps\nu_-\right)= \lim_{\eps\searrow 0} \eps^{-1/p}W_p\left(\mu+\eps\nu, \mu\right):=\|\nu\|_\mu^{1/p}$$
  where $\|\nu\|_\mu$ is defined in terms of an  optimal transport on a {\it finite, metric graph}. This is the rational behind the title "optimal teleportation".
 \par
 To describe the nature of $\|\nu\|_\mu$, consider a finite graph whose vertices are identified with the connected components $A_i$ of the support of $\mu$. The length of an edge connecting two vertices is defined as the $p$ power of the distance between the corresponding supports. We then consider the discrete metric space composed of these vertices, subjected to the geodesic distance corresponding the edge's length defined above.
 \par
 At each vertex $i$ of this graph  let $\bar{\nu}_i\in\R$ be the weight of the measure $\nu$ restricted to corresponding component  $A_i$. By neutrality $\sum_i\bar{\nu}_i=0$.
 \par
 Then   $\|\nu\|_\mu$ is just the optimal transport cost of    $\{\bar{\nu}_i>0\}$ to $\{\bar{\nu}_i<0\}$ for the above defined metric (c.f. Fig 2).

\section{Detailed Description of Main Results}

We start by posing some assumptions on a measure $\mu\in{\cal M}_1$:
\begin{defi}\label{dimensia}
$\mu$ is $d-$connected if {\em supp}($\mu$) is arc-connected and there exists $K,\delta>0$ such that for  any  $x\in$  {\em supp}($\mu$) and  any $0<r<\delta$
\be\label{ar}\mu(B_r(x))\geq Kr^d \ . \ee
\end{defi}
\begin{remark} Condition (\ref{ar}) states, in fact, that $\mu$ is $d-$ {\em Ahlfors regular} from below on its support. See e.g. [J] for more general definitions. Note also that if $\mu$ is $d-$connected then $\mu$ is $d^*-$connected for any $d^*\geq d$.
\end{remark}
Actually, we need a stronger definition for $d-$connected measure:
\begin{defi}\label{strongdom}
$\mu$ is strongly $d-$connected if there exist $ L, K>0$, $2\leq N\in \mathbb{N}$  and a measure space  $(D, \beta)$ such that for any  $x_0,x_1\in$ {\em supp}($\mu$) there  are $k\leq N$ points $y_1=x_0, y_2, \ldots y_k=x_1$ in {\em supp}($\mu$) and  $k-1$   measurable mappings $\Phi_j: J\:= [0,1]\times D\rightarrow$ {\em supp}($\mu$), $j=1, \ldots k-1$    such that
\begin{figure}
 \centering
\includegraphics[height=11.cm, width=14.cm]{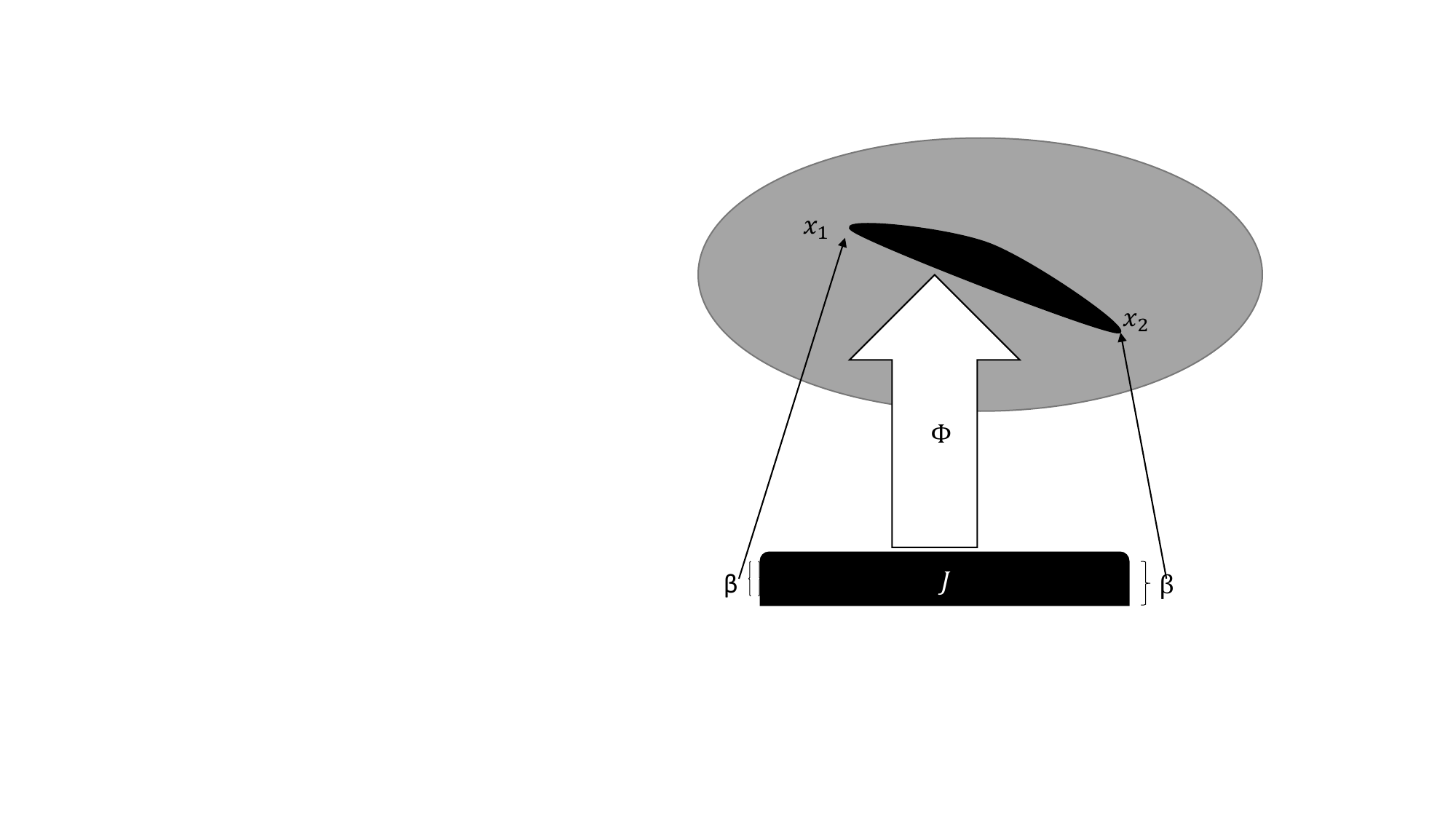}\\
\caption{Mapping of $J$ to $supp(\mu)$ via $\Phi$}\label{1}
\end{figure}
\begin{description}
\item{i)} $\Phi_j(\cdot, b):[0,1]\rightarrow\Omega$ is $L-$Lipschitz  on $[0,1]$ for any $b\in D$.
\item{ii)} $\Phi_j$ is injective  on $(0,1)\times D$,  $\Phi_j(0,b)=y_j$, $\Phi_j(1,b)=y_{j+1}$ for any $b\in D$.
\item{iii)} $\Phi_{j,\#}(\rho)\leq \mu$ where $\rho\in{\cal M}_+(J)$ given by the density \\ $\rho(ds,d\beta)= Ks^{d-1}(1-s)^{d-1}dsd\beta$.
\end{description}
\end{defi}
See Figure 1.
\begin{remark}
We conjecture that $d-$connectedness should be enough for the main results of this paper. Unfortunately we had to adopt the stronger definition for proving these results. Note that
 strong $d-$connected set is also $d-$connected. In fact, {\em supp}($\mu$) is arc connected and $\Phi_1([0,r/L]\times D)\subset B_r(x_0)$  by (i,ii). By (iii), $\mu(B_r(x_0))\geq K\beta(D)\int_0^{r/L} s^{d-1}(1-s)^{d-1}ds$, hence if, say, $r<L/2$ then
$\mu(B_r(x_0))\geq K\beta(D)r^d/(dL^d2^{d})$.
\end{remark}
\par\noindent Examples:
\begin{itemize}
\item Let $Y$ be a convex subset of an $m$ ($\leq k$) dimensional hyperplane in $\R^k$. Let $\mu\geq c{\cal H}^m(T)$ , $c>0$, ${\cal H}^m(Y)$ being the $m-$Hausdorff   measure  on $Y$. Then  $\mu$ is  strongly $m-$connected ($N=2$). The same for a starshaped $Y$ ($N=3$). \\
\item $\mu$ is uniformly distributed on the wedge $$\{ (x,y)\in\R^k\ ; \ 0\leq x\leq 1, \ y\in \R^{k-1}, \ |y|\leq x^\beta \}\  $$
where $k>1$, $\beta\geq 0$. $\mu$ is strongly $\beta (k-1)+1$ connected if $\beta\geq 1$ and strongly $k$ connected if $0\leq\beta\leq 1$ ($N=2$).
\item $\Omega=[0,3]\subset \R$ and $\mu$ has a density proportional to \\ $x\mapsto x(x-1)^2(x-2)^2$. In that case $\mu$ is strongly $3-$connected and $N=3$.
\end{itemize}

\subsection{Connected support}
\begin{theorem}\label{firstT}
Suppose $\mu$ is strongly $d-$connected  ($d\geq 1$) and $\nu=\nu_+-\nu_-\in {\cal M}_0$  such that $\text{supp}(\nu_\pm)\subset \text{supp}(\mu)$.   Then
there exists $C$ depending only on $\mu$  such that
\be\label{firstT1} \sup_{\eps>0} \eps^{-q}W_p\left(\mu+\eps\nu_+, \mu+\eps\nu_-\right)<C\|\nu\|_{TV}\ee
where $q=\min(1, 1/d+1/p)$ provided $p\not= d/(d-1)$.
\par
In the critical case $p=d/(d-1)$ (where $q=1$)
$$\sup_{\eps>0} \frac{1}{\eps\ln^{1/p}(1/\eps+1)}W_p\left(\mu+\eps\nu_+, \mu+\eps\nu_-\right)< C\|\nu\|_{TV} \ . $$

In particular there exists $C=C(\mu)$ for which
\be\label{01}W_p(\mu+\nu_+, \mu+\nu_-) \leq C(\mu)\|\nu\|^q_{TV}\ee
if $p\not=d/(d-1)$,
while if  $p=d/(d-1)$,
\be\label{02}W_p(\mu+\nu_+, \mu+\nu_-) \leq C(\mu)\|\nu\|_{TV}\ln^{1/p}\left(\|\nu\|^{-1}_{TV} +1\right) \  \ee
holds  for any balanced pair $\nu=\nu_+-\nu_-$.
\end{theorem}

\begin{remark}
By Proposition \ref{ma} below we can observe that the {\em optimal} $C(\mu)$ in (\ref{01}, \ref{02}) is monotone non-increasing  in $\mu$, that is $C(\mu_1)\geq C(\mu_2)$ if $\mu_1\leq \mu_2$. By the same Proposition we can also assume that $\nu_\pm$ is a factorization of $\nu$, namely $\|\nu\|_{TV}=|\nu_+|+|\nu_-|$. 
\end{remark}
\begin{remark}\label{critsob}
We may now make a connection between (\ref{lemma1}, \ref{lemma2}), Theorem~\ref{firstT} and the Sobolev embedding Theorem. Consider  the Sobolev space
$$\mathbb{W}^{1,p\prime}(\Omega):= \left\{ \phi\in \mathbb{L}^{p\prime}(\Omega)\ ;  \nabla\phi\in\mathbb{L}^{p\prime} \right\} $$
where $p>1$, $p\prime:= p/(p-1)$ and $\Omega\subset \R^d$ . If $p\prime>d$  then $\mathbb{W}^{1,p\prime}(\Omega)$ is embedded  in the space of bounded continuous functions $C(\Omega)$.  Suppose  $\mu$ is the  Lebesgue measure on a convex set $\Omega\subset \R^d$ (so, in particular, $d-$connected). This implies that the $\mathbb{W}^{1, p^{'}}$ closure of ${\cal B}_p(\mu)$ is embedded in $C(\Omega)$. Let $\nu=\delta_{x_0}-\delta_{x_1}$ where $x_0,x_1\in$ {\em supp}($\mu$).  Then  the right hand side of (\ref{lemma1}) is finite. On the other hand, the case $p\prime>d$ corresponds to the case $q=1$  so (\ref{firstT1}) is consistent with (\ref{lemma1}) in that case.
\par
Recall that the case $p\prime=d$ corresponds to the {\em critical Sobolev embedding} where $\mathbb{W}^{1,d}$  (or ${\cal B}_p(\mu)$) just fails to be embedded  in the space of continuous functions. In that case $D_p(\mu)=\infty$  (see (\ref{modT})). The bound of (\ref{02}) suggests that a Log-Lipschitz estimate corresponds to a critical Sobolev embedding in the context of Wasserstein metric.
\end{remark}

\subsection{Curves of probability measures}
 Let $\vec{\mu}:=\{\mu_{(t)}\}$, $t\in I$  be a curve of probability measures
  $$ \R\supset I \ni t \mapsto \mu_{(t)}\in {\cal M}_1 \ . $$
 Recall  that $\vec{\mu}\in AC^r(I, {\cal M}_1;TV)$ for some $\infty\geq r\geq 1$ if $\exists m\in \mathbb{L}^r(I)$ such that
 $$\|\mu_{(t)}-\mu_{(\tau)}\|_{TV}\leq \int_\tau^t m(s)ds$$
  for any $t>\tau\in I$.
  \begin{theorem}\label{thirdT}
  Suppose $\vec{\mu}\in  AC^r(I,{\cal M}_1;TV)$ for some $\infty\geq r>1$. Assume  also that the support of $\vec{\mu}$ is non-increasing, namely
  {\em supp}($\mu_{(t)}$) $\subseteq$ {\em supp}($\mu_{(\tau)}$) for any $\tau<t\in I$, and {\em supp}($\mu_{(t)}$) is uniformly $d-$connected with respect to $t$ (that is, $N,K,L$ can be chosen independently of $t$ in definition \ref{strongdom}).
  \par
  Then
   \begin{description}
  \item{i)}For any $p>1$,  $p/(p-1)\not= d$,  $\mu$ is uniformly $q(r-1)/r$- H\"{o}lder ($q-$H\"{o}lder if $r=\infty$) in the Wasserstein metric $W_p$ where $q=\min(1, 1/d+1/p)$, namely
  $$ W_p(\mu_{(t)}, \mu_{(\tau)})\leq C |t-\tau|^{q(r-1)/r}$$
  where $C$ is independent of $t\in I$. \par
   If $r=\infty$ and
  $p/(p-1)= d$ then $\mu$ is uniformly log-Lipschitz, that is,
  $$ W_p(\mu_{(t)}, \mu_{(\tau)})\leq C|t-\tau|\left[\ln^{1/p}\left(\frac{1}{|t-\tau|}\right)+1\right]$$
  for some $C$ independent of $t,\tau\in I$.

\item{ii)}
    If  $1<p< d/(d-1)$  then there exists
a Borel vector field $v\in \mathbb{L}^r\left(I, \mathbb{L}^p(\Omega; \mu_{(t)})\right)$ such that the continuity equation
 \be\label{Borelvf}\partial_t\mu+\nabla_x\cdot(v\mu)=0\ee
 is
satisfied in the sense of distributions in $I\times\Omega$.
\end{description}
 \end{theorem}

\section{Proofs for the case of a connected support}
In this section we introduce the proofs of Theorems \ref{firstT}- \ref{thirdT}.
\subsection{Proof of Theorem~\ref{firstT}}
\begin{proposition}\label{fit1}
Suppose $\mu$ is strongly $d-$connected  ($d\geq 1$) and $x_0,x_1\in$ {\em supp}($\mu$).   Then there exists $C=C(\mu)$ depending only on $\mu$ such that
\be\label{firstT1p} \sup_{\eps > 0} \eps^{-q}W_p\left(\mu+\eps\delta_{x_0}, \mu+\eps\delta_{x_1}\right)<C(\mu)\ee
where $q=\min(1, 1/d+1/p)$ provided $p\not= d/(d-1)$.
\par
In the critical case $p=d/(d-1)$ (where $q=1$)
\be\label{firstT2p}\sup_{\eps> 0} \frac{1}{\eps\ln^{1/p}(1/\eps+1)}W_p\left(\mu+\eps\delta_{x_0}, \mu+\eps\delta_{x_1}\right)< C(\mu) \ . \ee
\end{proposition}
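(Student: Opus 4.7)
The strategy is to reduce the estimate to a one-dimensional Wasserstein computation along the paths $\Phi(\cdot,\beta)$ of the tube provided by Definition~\ref{strongdom}. Since the chain $y_1=x_0,\ldots,y_k=x_1$ contains at most $N$ points, the triangle inequality for $W_p$ (applied to measures of common total mass $1+\eps$) gives
\[
W_p(\mu+\eps\delta_{x_0},\mu+\eps\delta_{x_1})\le \sum_{j=1}^{k-1} W_p(\mu+\eps\delta_{y_j},\mu+\eps\delta_{y_{j+1}}),
\]
so it suffices to handle the case $k=2$, namely a single tube $\Phi:[0,1]\times D\to\mathrm{supp}(\mu)$ joining $x_0$ to $x_1$.

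Fix such a $\Phi$, set $M:=\beta(D)$ (finite because $\Phi_\#\rho\le\mu$) and $\alpha:=\eps/M$, and write $\mu=\Phi_\#\rho+(\mu-\Phi_\#\rho)$; by (iii) both summands are non-negative. The identity plan transports $\mu-\Phi_\#\rho$ to itself at zero cost, so the problem reduces to transporting $\Phi_\#\rho+\eps\delta_{x_0}$ to $\Phi_\#\rho+\eps\delta_{x_1}$. For each $\beta\in D$ let $\pi_\beta$ be an optimal one-dimensional plan on $[0,1]^2$ from $\rho_\beta+\alpha\delta_0$ to $\rho_\beta+\alpha\delta_1$, where $\rho_\beta(ds):=Ks^{d-1}(1-s)^{d-1}ds$. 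Define
\[
\pi_T:=\int_D\bigl(\Phi(\cdot,\beta)\times\Phi(\cdot,\beta)\bigr)_\#\pi_\beta\,d\beta.
\]
Using $\Phi(0,\beta)=x_0$ and $\Phi(1,\beta)=x_1$, one checks directly that $\pi_T$ has first marginal $\Phi_\#\rho+\eps\delta_{x_0}$ and second marginal $\Phi_\#\rho+\eps\delta_{x_1}$, and the $L$-Lipschitz property of $\Phi(\cdot,\beta)$ yields
\[
\int|x-y|^p\,d\pi_T\le L^p\,M\cdot W^p_{p,[0,1]}(\rho_\beta+\alpha\delta_0,\rho_\beta+\alpha\delta_1).
\]

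It therefore remains to estimate the one-dimensional Wasserstein distance. With $F(s):=K\int_0^s t^{d-1}(1-t)^{d-1}dt$, the explicit 1D quantile formula splits as
\begin{align*}
W^p_{p,[0,1]}(\rho_\beta+\alpha\delta_0,\rho_\beta+\alpha\delta_1) &= \int_0^\alpha F^{-1}(u)^p\,du + \int_{F(1)}^{F(1)+\alpha}(1-F^{-1}(u-\alpha))^p\,du \\
&\quad{}+\int_\alpha^{F(1)}\bigl|F^{-1}(u)-F^{-1}(u-\alpha)\bigr|^p\,du.
\end{align*}
Near each endpoint $F^{-1}(u)\sim(du/K)^{1/d}$, so the two boundary integrals each contribute $O(\alpha^{1+p/d})$. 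For the interior integral, the bounds $|F^{-1}(u)-F^{-1}(u-\alpha)|\le C\alpha^{1/d}$ on $[\alpha,2\alpha]$ and $|F^{-1}(u)-F^{-1}(u-\alpha)|\le C\alpha u^{(1-d)/d}$ on $[2\alpha,1/2]$ (plus the symmetric estimate near $F(1)$) reduce matters to bounding $\alpha^p\int_{2\alpha}^{1/2}u^{p(1/d-1)}du$. This integral is $O(1)$ when $1/p+1/d>1$, $O(\log(1/\alpha))$ when $1/p+1/d=1$, and $O(\alpha^{p/d-p+1})$ when $1/p+1/d<1$. Summing the three pieces yields $W^p_{p,[0,1]}\le C\alpha^{pq}$ in the non-critical cases and $C\alpha^p\log(1/\alpha+1)$ in the critical case; substituting $\alpha=\eps/M$ into the tube estimate gives (\ref{firstT1p}) and (\ref{firstT2p}).

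The main technical obstacle is the one-dimensional estimate near $u=\alpha$, where $F^{-1}(u)-F^{-1}(u-\alpha)$ transitions between the \emph{differential shift} regime of size $\alpha(F^{-1})'(u)\sim\alpha u^{(1-d)/d}$ and the \emph{full transport of mass $\alpha$} regime of size $\alpha^{1/d}$. The three exponent regimes and the critical logarithm emerge naturally from this crossover, matching precisely the Sobolev-embedding heuristic discussed in Remark~\ref{critsob}.
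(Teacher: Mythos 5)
Your proposal is correct and follows essentially the same route as the paper: triangle inequality along the chain $y_1,\ldots,y_k$, reduction to the tube measure $\Phi_\#\rho$ by coupling the remainder $\mu-\Phi_\#\rho$ to itself (the paper's principle of monotone additivity), a fiberwise one-dimensional monotone coupling along the $L$-Lipschitz curves $\Phi(\cdot,\beta)$, and the quantile-function computation producing the three exponent regimes and the critical logarithm. If anything, your crossover analysis of the interior integral near $u=\alpha$ is spelled out more carefully than the paper's corresponding step (\ref{es3}).
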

\begin{lemma} Proposition \ref{fit1} and Theorem \ref{firstT} are equivalent.
\end{lemma}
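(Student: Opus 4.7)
The plan is to handle the two implications of the equivalence separately. The direction Theorem~\ref{firstT}$\Rightarrow$Proposition~\ref{fit1} is immediate: specialising Theorem~\ref{firstT} to the balanced pair $\nu_+=\delta_{x_0}$, $\nu_-=\delta_{x_1}$ (for which $\text{supp}(\nu)\subset\text{supp}(\mu)$ and $\|\nu\|_{TV}=2$) returns at once the bounds (\ref{firstT1p}) and (\ref{firstT2p}).

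The substantive direction is Proposition~\ref{fit1}$\Rightarrow$Theorem~\ref{firstT}, which I would obtain by a discretisation-plus-$p$-power subadditivity argument. Given a balanced pair $\nu=\nu_+-\nu_-$ with $\text{supp}(\nu)\subset\text{supp}(\mu)$ and common mass $M:=\langle\nu_+,1\rangle=\|\nu\|_{TV}/2$, pick any probability coupling $\tilde\pi$ of $\nu_+/M$ with $\nu_-/M$ and approximate it by empirical measures $\tilde\pi_N=\frac{1}{N}\sum_{k=1}^N\delta_{(x_k,y_k)}$ whose pairs are drawn from $\tilde\pi$. The atomic measures $\nu_+^N:=(M/N)\sum_k\delta_{x_k}$ and $\nu_-^N:=(M/N)\sum_k\delta_{y_k}$ then converge weakly to $\nu_\pm$ respectively, with all atoms in $\text{supp}(\mu)$.

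The heart of the reduction is the convex decomposition
\be \mu+\eps\nu_+^N=\sum_{k=1}^N\frac{1}{N}\bigl(\mu+\eps M\delta_{x_k}\bigr), \ee
and its analogue for $\nu_-^N$. Concatenating the $N$ optimal transference plans provided by Proposition~\ref{fit1} for the Dirac-Dirac pairs $(\mu+\eps M\delta_{x_k},\mu+\eps M\delta_{y_k})$ into a single plan between the two sums, and invoking the homogeneity relation (\ref{recallH}) in the form $W_p^p(\alpha\mu_1,\alpha\mu_2)=\alpha W_p^p(\mu_1,\mu_2)$, yields
\be W_p^p(\mu+\eps\nu_+^N,\mu+\eps\nu_-^N)\leq\frac{1}{N}\sum_{k=1}^N W_p^p(\mu+\eps M\delta_{x_k},\mu+\eps M\delta_{y_k}). \ee
Proposition~\ref{fit1}, applied with $\eps$ replaced by $\eps M$, controls each summand on the right. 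Passage to the limit $N\to\infty$ by weak continuity of $W_p$ on measures of fixed total mass then transfers the resulting estimate from the atomic approximants to $\nu_\pm$.

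The main technical obstacle is tracking the mass factor $M$ so as to recover the $\|\nu\|_{TV}$-dependence claimed in (\ref{firstT1}) and (\ref{firstT2}). In the non-critical case Proposition~\ref{fit1} gives a per-term bound of order $(\eps M)^q$, and one must exploit the scaling $M=\|\nu\|_{TV}/2$ together with the freedom in $\eps$ to reach the form stated in Theorem~\ref{firstT}. In the critical case $p=d/(d-1)$ the factor $(\eps M)\ln^{1/p}(1/(\eps M)+1)$ that replaces $(\eps M)^q$ must be re-expressed uniformly in $\eps>0$ in terms of $\eps\ln^{1/p}(1/\eps+1)$, at the cost of absorbing the discrepancy into $C(\mu)$; this is the delicate part since small $M$ inflates the logarithm. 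The measure-theoretic step that the concatenation of couplings is itself a coupling with additive $p$-cost, and the weak continuity of $W_p$ on a compact $\Omega$, are both standard and pose no difficulty.
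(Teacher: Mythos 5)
Your proof is correct, but it takes a genuinely different route from the paper. The paper's argument for the substantive direction is a two-line duality trick: normalising $\|\nu\|_{TV}=2$, it takes the optimal Kantorovich pair $(\bar\phi,\bar\psi)\in{\cal C}_p$ for $(\mu+\eps\nu_+,\mu+\eps\nu_-)$ from (\ref{dualp}), and replaces $\nu_+$ by $\delta_{x_0}$ at a maximiser $x_0$ of $\bar\phi$ on $\text{supp}(\nu_+)\subset\text{supp}(\mu)$ and $\nu_-$ by $\delta_{x_1}$ at a minimiser of $\bar\psi$; since $\nu_\pm$ are probability measures this only increases the dual functional, which is in turn dominated by $W_p^p(\mu+\eps\delta_{x_0},\mu+\eps\delta_{x_1})$. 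No approximation, no limit, and a single Dirac pair (chosen adaptively via the optimal potentials) suffices. Your primal route --- empirical discretisation, the convex decomposition $\mu+\eps\nu_+^N=\frac1N\sum_k(\mu+\eps M\delta_{x_k})$, joint convexity of $W_p^p$ via concatenation of plans and (\ref{recallH}), then weak continuity of $W_p$ on measures of fixed mass --- is sound (note you do need, and Proposition~\ref{fit1} does supply, a constant $C(\mu)$ uniform over all pairs $x_0,x_1\in\text{supp}(\mu)$), and it has the merit of being constructive and independent of duality. The "delicate part" you flag about recovering linear dependence on $\|\nu\|_{TV}$ is real, but it is exactly the same loose end the paper leaves implicit behind its normalisation $\|\nu\|_{TV}=2$: rescaling yields $M^q$ (or $M\ln^{1/p}(1/(\eps M)+1)$ in the critical case) rather than $M$, which is precisely why the paper's own corollary (\ref{01}) carries $\|\nu\|_{TV}^q$ rather than $\|\nu\|_{TV}$. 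So on that point your argument is neither better nor worse than the paper's; the net trade-off is that the paper's proof is shorter and limit-free, while yours is more elementary and would transfer to settings where the dual formulation is less accessible.
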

\begin{proof}
Obviously Theorem \ref{firstT} implies Proposition \ref{fit1}. To see the opposite direction recall
 (see, e.g. [R])
\be\label{dualp}W^p_p(\mu_1, \mu_2) = \sup_{(\phi,\psi)\in {\cal C}_p(\Omega)} \langle \mu_1,\phi\rangle - \langle \mu_2,\psi\rangle\ee
where
\be\label{Cp} {\cal C}_p(\Omega):= \left\{ (\phi,\psi)\in C(\Omega)\times C(\Omega); \ \phi(x)-\psi(y)\leq |x-y|^p \ \ \ \forall (x,y)\in\Omega\times\Omega\right\}\ee
Without limiting the generality we may assume  $|\nu_+|=|\nu_-|=1$).
Let  $\delta>0$ and $(\bar{\phi}_\delta,\bar{\psi}_\delta)\in {\cal C}_p(\Omega)$ such that 
$$ W^p_p(\mu_1, \mu_2) \leq   \langle \mu_1,\bar{\phi}_\delta\rangle - \langle \mu_2,\bar{\psi}_\delta\rangle+\delta $$ 
  where $\mu_1=\mu+\eps\nu_+$, $\mu_2=\mu+\eps\nu_-$. Let $x_0$ be a maximizer of $\bar{\phi}_\delta$ and $x_1$ a minimizer of $\bar{\psi}_\delta$. Then
\begin{multline}\label{nudelta} W^p_p(\mu+\eps\nu_+, \mu+\eps\nu_-) \leq    \langle  \mu+\eps\nu_+, \bar{\phi}_\delta\rangle - \langle\mu+\eps\nu_-,\bar{\psi}_\delta \rangle+\delta \\
 \leq\langle \mu+\eps\delta_{x_0},\bar{\phi}_\delta\rangle - \langle \mu+\eps\delta_{x_1},\bar{\psi}_\delta\rangle +\delta \leq  W^p_p(\mu+\eps\delta_{x_0}, \mu+\eps\delta_{x_{1}}) +\delta \ . \end{multline}
 Since $\delta>0$ is arbitrary (and independent of $\eps$) we obtain the result. 
\end{proof}
The following result is very easy but useful. For completeness we introduce the proof:
\begin{proposition}\label{ma}
{\em Principle of monotone additivity}: Let $\mu_1, \mu_2, \lambda\in {\cal M}_+$, $|\mu_1|=|\mu_2|$. Then $W_p(\mu_1,\mu_2)\geq W_p(\mu_1+\lambda, \mu_2+\lambda)$.
\end{proposition}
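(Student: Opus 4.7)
The plan is to build a transport plan between $\mu_1+\lambda$ and $\mu_2+\lambda$ by gluing an optimal plan between $\mu_1$ and $\mu_2$ with the trivial ``do nothing'' plan that transports $\lambda$ to itself at zero cost.

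First I would reduce to the probability-measure case by homogeneity. Since $(\mu_1,\mu_2)$ is balanced, let $M := \langle \mu_1,1\rangle = \langle \mu_2,1\rangle$ and $L := \langle \lambda,1\rangle$; set $\tilde\mu_i := \mu_i/M$ and $\tilde\lambda := \lambda/L$ (assuming $M, L > 0$; the degenerate cases are trivial). By (\ref{recallH}),
\[
W_p^p(\mu_1,\mu_2) = M\, W_p^p(\tilde\mu_1,\tilde\mu_2), \qquad W_p^p(\mu_1+\lambda,\mu_2+\lambda) = (M+L)\, W_p^p\!\left(\tfrac{\mu_1+\lambda}{M+L},\tfrac{\mu_2+\lambda}{M+L}\right).
\]

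Next, let $\pi \in \Pi(\tilde\mu_1,\tilde\mu_2)$ be an optimal plan realizing $W_p(\tilde\mu_1,\tilde\mu_2)$, and let $\lambda_\Delta \in \mathcal{P}(\Omega\times\Omega)$ be the pushforward of $\lambda$ under the diagonal embedding $x\mapsto(x,x)$. Define
\[
\tilde\pi := \frac{M\pi + \lambda_\Delta}{M+L}.
\]
Its first marginal is $(M\tilde\mu_1 + \lambda)/(M+L) = (\mu_1+\lambda)/(M+L)$, and likewise for the second marginal, so $\tilde\pi \in \Pi\!\left(\tfrac{\mu_1+\lambda}{M+L},\tfrac{\mu_2+\lambda}{M+L}\right)$. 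Since $\lambda_\Delta$ is supported on $\{x=y\}$, it contributes zero to the transport cost, so
\[
\int_{\Omega\times\Omega} |x-y|^p\, \tilde\pi(dxdy) = \frac{M}{M+L}\int_{\Omega\times\Omega} |x-y|^p\, \pi(dxdy) = \frac{M}{M+L}\, W_p^p(\tilde\mu_1,\tilde\mu_2).
\]

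Plugging the inequality $W_p^p\!\left(\tfrac{\mu_1+\lambda}{M+L},\tfrac{\mu_2+\lambda}{M+L}\right) \leq \int |x-y|^p\,\tilde\pi\,(dxdy)$ back into the homogeneity identities cancels the $M+L$ factor and yields $W_p^p(\mu_1+\lambda,\mu_2+\lambda) \leq M\, W_p^p(\tilde\mu_1,\tilde\mu_2) = W_p^p(\mu_1,\mu_2)$. No step is really an obstacle here; the only subtlety worth flagging is that the definition (\ref{wasser}) and (\ref{recallH}) only define $W_p$ between balanced pairs of positive measures, so the bookkeeping of normalizing by the total mass before invoking the standard Wasserstein inequality must be done explicitly rather than invoking $W_p$ directly on the (non-probability) measures $\mu_i$ and $\mu_i+\lambda$.
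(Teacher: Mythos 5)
Your proof is correct, but it takes the primal route where the paper takes the dual one. You construct an explicit competitor plan for the pair $(\mu_1+\lambda,\mu_2+\lambda)$ by gluing an optimal plan $\pi$ for $(\mu_1,\mu_2)$ with the diagonal lift of $\lambda$, which costs nothing; the normalization bookkeeping via (\ref{recallH}) is handled cleanly and the mass/marginal checks all go through. The paper instead argues on the dual side: it takes a maximizing Kantorovich pair $(\phi,\psi)\in{\cal C}_p$ for $W_p^p(\mu_1+\lambda,\mu_2+\lambda)$, observes that the constraint $\phi(x)-\psi(y)\leq|x-y|^p$ forces $\phi-\psi\leq 0$ on the diagonal so that $\langle\lambda,\phi-\psi\rangle\leq 0$, and then bounds $\langle\mu_1,\phi\rangle-\langle\mu_2,\psi\rangle$ by $W_p^p(\mu_1,\mu_2)$ using (\ref{dualp}) again. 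Your version needs attainment of the infimum in the primal problem (automatic here since $\Omega$ is compact, or avoidable by taking $\varepsilon$-optimal plans), while the paper's needs attainment of the maximum in the dual; both are standard. Your approach has the mild advantage of being constructive and of making the ``do nothing with $\lambda$'' intuition explicit, whereas the dual argument is shorter and sidesteps the normalization issue entirely because (\ref{dualp}) is linear in the measures. Either proof is acceptable.
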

\begin{proof}
Let $\delta>0$. By (\ref{dualp}, \ref{Cp})   there exists $(\phi,\psi)\in {\cal C}_p$ for which
\begin{multline}W^p_p(\mu_1+\lambda, \mu_2+\lambda)\leq
 \langle\mu_1+\lambda, \phi\rangle- \langle\mu_2+\lambda, \psi\rangle+\delta  \\  =\langle\mu_1, \phi\rangle- \langle\mu_2, \psi\rangle + \langle \lambda, \phi-\psi\rangle+\delta\leq\langle\mu_1, \phi\rangle- \langle\mu_2, \psi\rangle +\delta\
\leq W_p^p(\mu_1, \mu_2)+\delta \ . \end{multline} The first inequality follows from $\phi(x)-\psi(x)\leq 0$ for any $x\in\Omega$ by (\ref{Cp}). The third one from (\ref{dualp}). Again, we obtain the desired result since $\delta>0$ is arbitrary. 
\end{proof}
 \subsection{Proof of Proposition~\ref{fit1}}\label{sa}
 To illustrate the proof we start by stating some simplifying assumptions:

   $\Omega$ is one dimensional, e.g $\Omega=[0,1]$, $x_0=0, x_1=1$
  and  \be\label{saa}\mu(ds)=\frac{\rho(s)ds}{\int_0^1\rho(t)dt} \ \ \text{where} \  \rho(s)= K s^{d-1}(1-s)^{d-1} \ . \ee
   For $\mu_1, \mu_2\in {\cal M}_1[0,1]$, let $M_i(s):=\mu_i[0,s]$  be the cumulative distribution function (CDF) of $\mu_i$ for $i=1,2$ respectively.  Let $S^{(i)}$ be the generalized inverses of $M_i$. Then (cf. Theorem 2.18 in [V] for the case $p=2$ and Remark 2.19 there for the general case)
  \be\label{1d} W^p_p(\mu_1, \mu_2)=\int_0^1|S^{(1)}(m)-S^{(2)}(m)|^p dm \ . \ee
  In our case $M_1$ is the CDF of $\mu+\eps\delta_0$ while $M_2$ the CDF of $\mu+\eps\delta_1$. Setting $M=M(s)$ the CDF of $\mu$ and $S=S(m)$ its generalized inverse, then $M_1(s)=M(s)+\eps$ on $(0,1]$ and $M_2(s)=M(s)$ on $s\in[0,1)$, $M_2(1)=1+\eps$.
  The corresponding inverses are
  \begin{description}
  \item{i)}
   $S^{(1)}(m)= 0$ for $m\in[0,\eps]$, $S^{(1)}(m)=S(m-\eps)$ for $\eps\leq m\leq 1+\eps$.
   \item{ii)}   $S^{(2)}(m)=S(m)$ for $m\in[0,1]$ and $S^{(2)}(m)=1$ for
  $m\in[1, 1+\eps]$.
  \end{description}

   Then (\ref{1d}) implies
  $ W^p_p(\mu+\eps\delta_0, \mu+\eps\delta_1)=$
  \be\label{es1}\int_0^\eps|S(m)|^pdm+ \int_\eps^1|S(m)-S(m-\eps)|^p dm+ \int_1^{1+\eps}|S(m-\eps)-1|^pdm \ . \ee
  Since $S$ is monotone non decreasing:
  \be\label{es2}\int_0^\eps|S(m)|^pdm+\int_1^{1+\eps}|S(m-\eps)-1|^pdm\leq \eps\left[ S^p(\eps)  +|1-S(1-\eps)|^p\right]\ee
while
\be\label{es3}  \int_\eps^1|S(m)-S(m-\eps)|^p dm= \eps^p\int_\eps^{1-\eps}\left|\frac{dS}{dm}\right|^p dm\left(1+o(1)\right)\ . \ee
     By the simplifying assumptions  (\ref{saa})
      $$ \kappa_1s^d\leq M(s) \leq \kappa_2s^d \ \ , \ \ \kappa_1(1-s)^d\leq 1- M(s) \leq \kappa_2(1-s)^d$$ for some $0<\kappa_1<\kappa_2$ where $s\in [0,1]$.  Hence
  $$  \kappa_2^{-1/d}m^{1/d}\leq S(m)\leq   \kappa_1^{-1/d}m^{1/d} \ \  , \ \ \kappa_2^{-1/d}(1-m)^{1/d}\leq 1-S(m)\leq   \kappa_1^{-1/d}(1-m)^{1/d}
  $$
for $m\in [0, 1]$.
 From this and  $S^{'}(m):= dS/dm= 1/\rho(S(m))$
 $$ S^{'}(m)= \frac{1}{\rho(S(m))}\leq \kappa \min\{ m^{1/d-1}, (1-m)^{1/d-1}\} $$
 for some $\kappa>0$ and $m\in[0,1]$.
   It follows from (\ref{1d}-\ref{es3}) that
  \begin{description}
  \item{i)} If $p<d/(d-1)$ then
  $W^p_p(\mu+\eps\delta_0, \mu+\eps\delta_1)\leq O(\eps^p)$.
 \item{ii)}  if $p=d/(d-1)$ then
   $W^p_p(\mu+\eps\delta_0, \mu+\eps\delta_1)\leq O\left(\eps^p\ln(1/\eps+1)\right)$.
  \item{iii)}  if $p>d/(d-1)$ then
    $W^p_p(\mu+\eps\delta_0, \mu+\eps\delta_1)\leq O\left(\eps^{p/d+1}\right)$.
\end{description}

In the general case, we provide the estimate (i-iii) for \\ $W^p_p(\mu+\eps\delta_{y_j}, \mu+\eps\delta_{y_{j+1}})$ for $j=1, \ldots k-1$ (see Definition
~\ref{strongdom}). Indeed, since $W_p$ is a metric we get by the triangle inequality
$$ W^p_p(\mu+\eps\delta_{x_0}, \mu+\eps\delta_{x_1})\leq \left(\sum_{j=1}^kW_p(\mu+\eps\delta_{y_j}, \mu+\eps\delta_{y_{j+1}})\right)^p \ . $$

Consider $J,\Phi_j$ as in Definition~\ref{strongdom}. We my replace $\mu$ by the measure $\hat{\mu}:=K\Phi_{j,\#} \rho$. Indeed, by assumption, $\hat{\mu}\leq \mu$ and the inequality
\be\label{settT} W_p(\mu+\eps\delta_{y_j}, \mu+\eps\delta_{y_{j+1}}) \leq W_p(\hat{\mu}+\eps\delta_{y_j}, \hat{\mu}+\eps\delta_{y_{j+1}})\ee
is evident by monotone additivity (Proposition \ref{ma}).
\par
Let $(X_\eps,\sigma)$ be a reference measure space  such that $\int_{X_\eps} d\sigma=\eps+\int_\Omega d\hat{\mu}$. If $T^{(i)}:X_\eps\rightarrow\Omega$, $i=1,2$, is a pair of Borel mappings such that  $T^{(1)}_\#\sigma=\hat{\mu}+\eps\delta_{y_j}$,  $T^{(2)}_\#\sigma=\hat{\mu}+\eps\delta_{y_{j+1}}$ then
\be\label{setT}W^p_p(\hat{\mu}+\eps\delta_{y_j}, \hat{\mu}+\eps\delta_{y_{j+1}})\leq\int_{X_\eps}\left|T^{(1)}(x)-T^{(2)}(x)\right|^p\sigma(dx) \ . \ee
We now construct $(X_\eps,\sigma)$ as follows:

Let  $M=M(s)$ be the CDF of $\rho$ (c.f. (\ref{saa})).  Set $\bar{M}:= M(1)$.  Then
$$ X_\eps:= \left\{ (m, \beta)\in [0,\bar{M}+\eps]\times D\right\}$$
and $\sigma$ is a multiple  $dmd\beta$  on $X_\eps$, normalized according to $\int_{X_\eps} d\sigma=\int_\Omega d\hat{\mu}+\eps$.

Let $S=S(m)$ the  generalized inverse of $M=M(s)$, and extend it to $X_\eps$ by $S(m,\beta)=S(m)$.
In analogy with one-dimensional case above, set
\begin{description}
  \item{i)}
   $S^{(1)}(m,\beta)= 0$ for $m\in[0,\eps]$, $S^{(1)}(m,\beta)=S(m-\eps,\beta)$ for $\eps\leq m\leq \bar{M}+\eps$.
   \item{ii)}   $S^{(2)}(m,\beta)=S(m,\beta)$ for $m\in[0,\bar{M}]$ and $S^{(2)}(m,\beta)=\bar{M}$ for
  $m\in[\bar{M}, \bar{M}+\eps]$.
  \end{description}
  By construction, $S^{(i)}:X_\eps\rightarrow J$ satisfy $$S^{(1)}_\#\sigma= \rho ds +\eps\delta_{s=0} d\beta \ ; \ \ \ \ S^{(2)}_\#\sigma= \rho ds+\eps\delta_{s=1}d\beta \ . $$
   From Definition~\ref{strongdom} it follows that $T^{(1,2)}:= \Phi_j\circ S^{(1,2)}$ satisfy $T^{(1)}_\# \sigma=\tilde{\mu}+\eps\delta_{y_{j}}$, $T^{(2)}_\# \sigma=\tilde{\mu}+\eps\delta_{y_{j+1}}$. Then Definition~\ref{strongdom}-(i')  yields
  $$\int_{X_\eps}\left|T^{(1)}(m,\beta)-T^{(2)}(m,\beta)\right|^pdmd\beta\leq L^p\int_{X_\eps}\left|S^{(1)}(m,\beta)-S^{(2)}(m,\beta)\right|^p dmd\beta \ . $$
  We now proceed as in the one-dimensional case to obtain the proof  by (\ref{settT}, \ref{setT}) via (\ref{es1}-\ref{es3}), in the general case.
\subsection{Proof of Theorem \ref{thirdT}}
\begin{proposition}\label{secondT}
Suppose  $\mu\in{\cal M}_+$, $\nu\in {\cal M}_0$ and $\mu+\nu\in{\cal M}_+$.  Under the assumptions of Theorem \ref{firstT}, there exists $\bar{C}=\bar{C}(\mu)$ for which
\be\label{firstT10}W_p\left(\mu+\nu, \mu\right)<\bar{C}\|\nu\|^q_{TV}\ee
where $q=\min(1, 1/d+1/p)$ provided $p\not= d/(d-1)$.
\par
In the critical case $p=d/(d-1)$ (where $q=1$)
$$W_p\left(\mu+\nu, \mu\right)< \bar{C}\|\nu\|_{TV}\ln\left(\|\nu\|^{-1}_{TV}+1\right) \ . $$
\end{proposition}
For the proof of this proposition we need the following auxiliary lemma
\begin{lemma}\label{fit}
Suppose $\mu,\nu_-\in{\cal M}_+$, $\mu$ is   $d-$connected and   $\nu_-\leq \mu$. Then there exists $\tilde{\nu}\in{\cal M}_+$ such that $\tilde{\nu}-\nu_-\in{\cal M}_0$,
$\tilde{\nu}\leq \mu/2$, $\tilde{\nu}+\nu_-\leq \mu$ and  a constant $\hat{C}(\mu)$  such that $$W_p(\mu-\nu_-, \mu-\tilde{\nu})< \hat{C}(\mu)|\nu_-|^q $$
with $q=\min\{1, 1/p+1/d\}$.
\end{lemma}
\begin{proof}
Given $\eps_0>0$ it is enough to prove it for any $|\nu_-|<\eps_0$. So, let $|\nu_-|=\eps<\eps_0$. Let $\beta>0$ large enough (independent of $\eps$). For any such $\eps$ we  divide the domain $\text{supp}(\mu)$ into essentially disjoint, measurable cells $U_i\subset\Omega$
 such that\\  $\cup U_i\supset$ supp($\mu$), $U_i\cap U_j=\emptyset$ where $i\not= j$, and such that
\begin{description}
\item{i)} Each cell contains a ball of radius $r_\eps:=(4/K)^{1/d}\eps^{1/d}$ whose center is in $supp(\mu)$.  Here $K$ is given by Definition \ref{strongdom}.
    \item{ii)} Each cell is contained in a concentric ball of radius $\beta r_\eps$.
\end{description}
The existence of such a division can easily be demonstrated by tilling a neighborhood of $supp(\mu)$ by, say, identical boxes. The constant $\beta$ depends  {\it only}  on the dimension of the embedding domain.

Let $\nu_i$ be the restriction of $\nu_-$ to $U_i$, $\alpha_i:= |\nu_i|$, the mass of $\nu_-$ contained in $U_i$.
By assumption, $\sum_i\alpha_i=\eps$.
\par
By $d-$connectedness (see Definition~\ref{dimensia}) and (i), $\mu(U_i)\geq 4\eps$ for any $i$.  Let
$$ V_i:=\{ x\in U_i; \ d\nu_i/d\mu\leq 1/2 \} \  $$
where $d\nu_i/d\mu$ stands for the Radon-Nikodym derivative. (Note that $d\nu_i/d\mu\leq 1$ since $\nu_i\leq\nu_-\leq \mu$). Then
$$ \eps\geq\alpha_i\geq \int_{U_i-V_i}(d\nu_i/d\mu)d\mu \geq \frac{1}{2}\mu(U_i-V_i)$$
hence $\mu(U_i-V_i)\leq 2\eps$, so $\mu(V_i)\geq 4\eps-2\eps\geq 2\alpha_i$.

Let $\tilde{V}_i\subset V_i$, a measurable set such that $\mu(\tilde{V}_i)=2\alpha_i$. Define $\tilde{\nu}_i$ as the restriction of $\mu/2$ to $\tilde{V}_i$. In particular, $|\tilde{\nu}_i|=\alpha_i$, and $\tilde{\nu}_i\leq \mu/2$.

Let now $\tilde{\nu}:=\sum_i\tilde{\nu}_i$. Since the sets $\tilde{V}_i$ are mutually disjoint, $\tilde{\nu}\leq \mu/2$, i.e. $d\tilde{\nu}/d\mu\leq 1/2$ $\mu-$a.e. Moreover, $d\tilde{\nu}/d\mu+ d\nu_-/d\mu\leq 1$  $\mu$-a.e, since $d\tilde{\nu}/d\mu=0$ if $d\nu_-/d\mu> 1/2$ by construction while $d\nu_-/d\mu\leq 1$ by the assumption $\nu_-\leq \mu$.  So $\nu_-+\tilde{\nu}\leq \mu$. Finally,  $|\tilde{\nu}|=|\nu_-|=\eps$, so $\tilde{\nu}-\nu_-\in{\cal M}_0$.

Since the diameter of the set $U_i$ is not larger than $2\beta r_\eps$ (c.f. (ii)), the $W_p^p$ cost for shifting a mass $\alpha_i$  within  $U_i$     is not larger that $\alpha_i(2\beta r_\eps)^p$.  Hence
\be\label{sofsof0}W^p_p\left(\tilde{\nu}_i, \nu_i\right)\leq \alpha_i(2\beta r_\eps)^p=\alpha_i(2\beta)^p\left(\frac{4}{K}\right)^{p/d}\eps^{p/d}  \  \ee
Recalling $\nu_-=\sum \nu_i$, $\tilde{\nu}=\sum\tilde{\nu}_i$ we get $W_p^p(\tilde{\nu}, \nu_-)\leq \sum_i W^p_p\left(\tilde{\nu}_i, \nu_i\right)\leq $
 $$  \sum_i (2\beta)^p\left(\frac{4}{K}\right)^{p/d}\alpha_i\eps^{p/d}= (2\beta)^p\left(\frac{4}{K}\right)^{p/d}\eps^{p/d+1}= (2\beta)^p\left(\frac{4}{K}\right)^{p/d}|\nu_-|^{p/d+1}  $$
were we used $\sum\alpha_i=\eps=|\nu_-|$.

Let now $\lambda:= \mu-\nu_--\tilde{\nu}\geq 0$. Then
$$W_p(\mu-\nu_-, \mu-\tilde{\nu})=W_p(\lambda+\tilde{\nu}, \lambda+\nu_-)\leq W_p(\tilde{\nu}, \nu_-)\leq  (2\beta)\left(\frac{4}{K}\right)^{1/d}|\nu_-|^{q} \  $$
by Proposition \ref{ma}.  \end{proof}

{\it Proof  of Proposition \ref{secondT}}:
Let $\nu=\nu_+-\nu_-$. We may assume by the principle of  monotone additivity that $\nu_\pm$ are the positive/negative parts of $\nu$, i.e.
$\|\nu\|_{TV}=|\nu_+|+|\nu_-|$. Let  $\bar{\mu}:= \mu+\nu_+$. Then, by the triangle inequality,
\begin{multline}\label{000}W_p(\mu+\nu, \mu)=W_p(\bar{\mu}-\nu_-, \bar{\mu}-\nu_+)
\leq W_p(\bar{\mu}-\nu_-, \bar{\mu}-\tilde{\nu})
+W_p(\bar{\mu}-\tilde{\nu}, \bar{\mu}-\nu_+)\end{multline}
 where $\tilde{\nu}$ is as in Lemma~\ref{fit} (in particular, $\bar{\mu}$ majorizes  $\tilde{\nu}$, as well as $\nu_-, \nu_+$).
Since $\bar{\mu}\geq \mu$ we get by  monotone additivity and Lemma~\ref{fit}
\be\label{001}W_p(\bar{\mu}-\nu_-, \bar{\mu}-\tilde{\nu})\leq W_p(\mu-\nu_-, \mu-\tilde{\nu})\leq \hat{C}(\mu)|\nu_-|^q \equiv 2^{-q}\hat{C}(\mu)\|\nu\|^q_{TV}\ . \ee
Setting $\tilde{\mu} = \mu-\tilde{\nu}:= \bar{\mu}-\nu_+-\tilde{\nu}$ we get
\be\label{predelta} W_p(\bar{\mu}-\tilde{\nu}, \bar{\mu}-\nu_+)= W_p(\tilde{\mu}+\nu_+, \tilde{\mu}+\tilde{\nu}) \ . \ee
Now,  $\tilde{\mu}\geq\mu/2$ by Lemma \ref{fit}, and since $\|\nu_+-\tilde{\nu}\|_{TV}\leq |\nu_+|+|\tilde{\nu}| = |\nu_+|+|\nu_-|= \|\nu\|_{TV}$,
we obtain from Theorem \ref{firstT}, (\ref{predelta})   and Proposition {\ref{ma}
\be\label{do2} W_p(\bar{\mu}-\tilde{\nu}, \bar{\mu}-\nu_+) \leq C(\mu/2)\left\{ \begin{array}{c}
                                                                                      \|\nu\|^q_{TV} \ \text{if}\  \ p\not=\frac{d}{d-1} \\
                                                                                      \|\nu\|_{TV}\left(\ln(\|\nu\|^{-1}_{TV}+1\right) \ \ \text{if}\  \ p=\frac{d}{d-1}
                                                                                    \end{array}\right\}
  \ . \ee
The proposition now follows from (\ref{000},\ref{001},\ref{do2}) where $\bar{C}(\mu)=2^{-q}\hat{C}(\mu)+C(\mu/2)$.
\begin{proof}
{\em  of Theorem \ref{thirdT}:} \\
i) \ \
Given $t>\tau\in I$, let $\nu=\mu_{(t)}-\mu_{(\tau)}$. Note that { supp}($\nu$)$\subseteq${supp}($\mu_{(\tau)}$). Since $\vec{\mu}\in AC^r(I, TV)$
 \be\label{hara}\|\nu\|_{TV} \leq \int_\tau^t m \leq \|m\|_r|t-\tau|^{1-1/r} \ . \ee
  The assumptions of Theorem \ref{firstT} are satisfied so we obtain the result by   Proposition \ref{secondT}, upon estimating  $\|\nu\|_{TV}$ by (\ref{hara}).
\end{proof}

ii) \ \ Since  $1<p< d/(d-1)$   we get $q=1$ so (\ref{firstT10}), where $\mu:=\mu_{(\tau)}, \nu=\mu_{(t)}-\mu_{(\tau)}$,  with  the first inequality in  (\ref{hara}) imply
$$W_p(\mu_{(t)}, \mu_{(\tau)}) \leq C\int_\tau^t m \ . $$
Since  $\mu_{(\cdot)}\in AC^r(I, TV)$ by the assumption, then $m\in \mathbb{L}^r$ so $\vec{\mu}\in AC^r(I, W_p)$ as well.  The existence of a  vector field satisfying (\ref{Borelvf}) follows from   Theorem 8.3.1 in [AGS] (see also Theorem 5 in [L]).
\section{Optimal teleportation and disconnected support}
In the case of disconnected support of $\mu$ we obtain the following result:
\begin{assumption} \label{ass1}  .
\begin{enumerate}
\item $\mu\in {\cal M}_1$  and $\text{supp}(\mu)$ is composed of a finite number $(m\geq 2)$ of disjoint components $\mu =
\sum_{j=1}^m\mu_j$ where {\em supp}($\mu_i$) $\cap$ {\em supp}($\mu_j$) = $\emptyset$ for any $ i \not= j$.
\item  Each $\mu_i$ satisfies the assumptions of Theorem~\ref{firstT}.
\item  $\nu=\nu_+-\nu_-\in {\cal M}_0$, {\em supp}($\nu_+$)$\cup${\em supp}($\nu_-$) $\subset$ {\em supp}($\mu$).
\end{enumerate}
\end{assumption}
\begin{defi}\label{def1}
 $A_i$ := {\em supp}($\mu_i$) are the connected  components of $\text{supp}(\mu)$.
\begin{description}
\item{i)} $\bar{\nu}_j:= \langle\nu, 1_{A_j}\rangle$.   By Assumption~\ref{ass1}-(3),  $\sum_{j=1}^m\bar{\nu}_j=0$.
\item{ii)}  $V:= \{1...,m\}$, $V_+ := \{j \in V ; \bar{\nu}_j > 0\}$,  $V_- := \{j \in V;  \bar{\nu}_j < 0\}$.
 \item{iii)} For $i,j \in V$, $|E|_{i,j} := dist^p(A_i,A_j)\equiv
 \min_{x\in A_i, y\in A_j}|x-y|^p$.
 \item{iv)}
$G:=(V,E)$ is a complete graph (i.e. any two vertices are connected by an edge) whose vertices $V$ and  the length
of the edge $E_{i,j}$ connecting $i$ to $j$ is $|E|_{i,j}$.
\par
  \item{v)} Let  ${\cal O}_{i,j}$ is the set of all orbits in $V$ connecting $i$ to $j$, that is, $o_{i,j}\in {\cal O}_{i,j}$ if
 $$  o_{i,j} =\{o_{i,j}^{(1)}, \ldots o_{i,j}^{(n)}\} \subset V $$
 such that $o_{i,j}^{(1)}=i$, $o_{i,j}^{(n)}=j$. The length of such an orbit is $|o_{i,j}|=n$ in that case.

   Given $i, j \in V$, $d(i,j)$ is   the geodesic distance corresponding to $(V,E)$. That is:
\be\label{oo} d(i,j) := \min_{o_{i,j}\in {\cal O}_{i,j}}\sum_{l=1}^{|o_{i,j}|-1} |E|_{o_{i,j}^{(l)}, o_{i,j}^{(l+1)}}\ee

 See Figure 2 for an illustration.
\item{vi)}
Let now $\bar{\nu}_i>0$ be the charge associated with the vertex $i\in V_+$ , and  $-\bar{\nu}_j > 0$ the charge associated with $j\in V_-$. Let $\|\nu\|_\mu$ be the optimal cost of transportation of
$\sum_{i\in V_+} \bar{\nu}_i\delta_i$  to
$\sum_{j\in V_-} (-\bar{\nu}_j)\delta_j$
 subjected to the graph metric $d(i,j)$. That is:
 \be\label{mmk}\|\nu\|_\mu:=\min_{\lambda\in\lambda(\nu)}\sum_{i\in V_+}\sum_{j\in V_-}\lambda_{i,j}d(i,j):=\sum_{i\in V_+}\sum_{j\in V_-}\lambda^*_{i,j}|d(i,j) \ee
where $\lambda(\nu)$ is the set of non-negative $|V_+|\times |V_-|$ matrices $\{\lambda_{i,j}\}$ which satisfy:
\\
$\sum_{j\in V_-}\lambda_{i,j}=\bar{\nu}_i \ \ \text{if} \ i\in V_+$
\\
$\sum_{i\in V_+}\lambda_{i,j}=-\bar{\nu}_j \ \ \text{if} \ j\in V_-$ .
\end{description}

\end{defi}
 \begin{figure}
 \centering
\includegraphics[height=13.cm, width=20.cm]{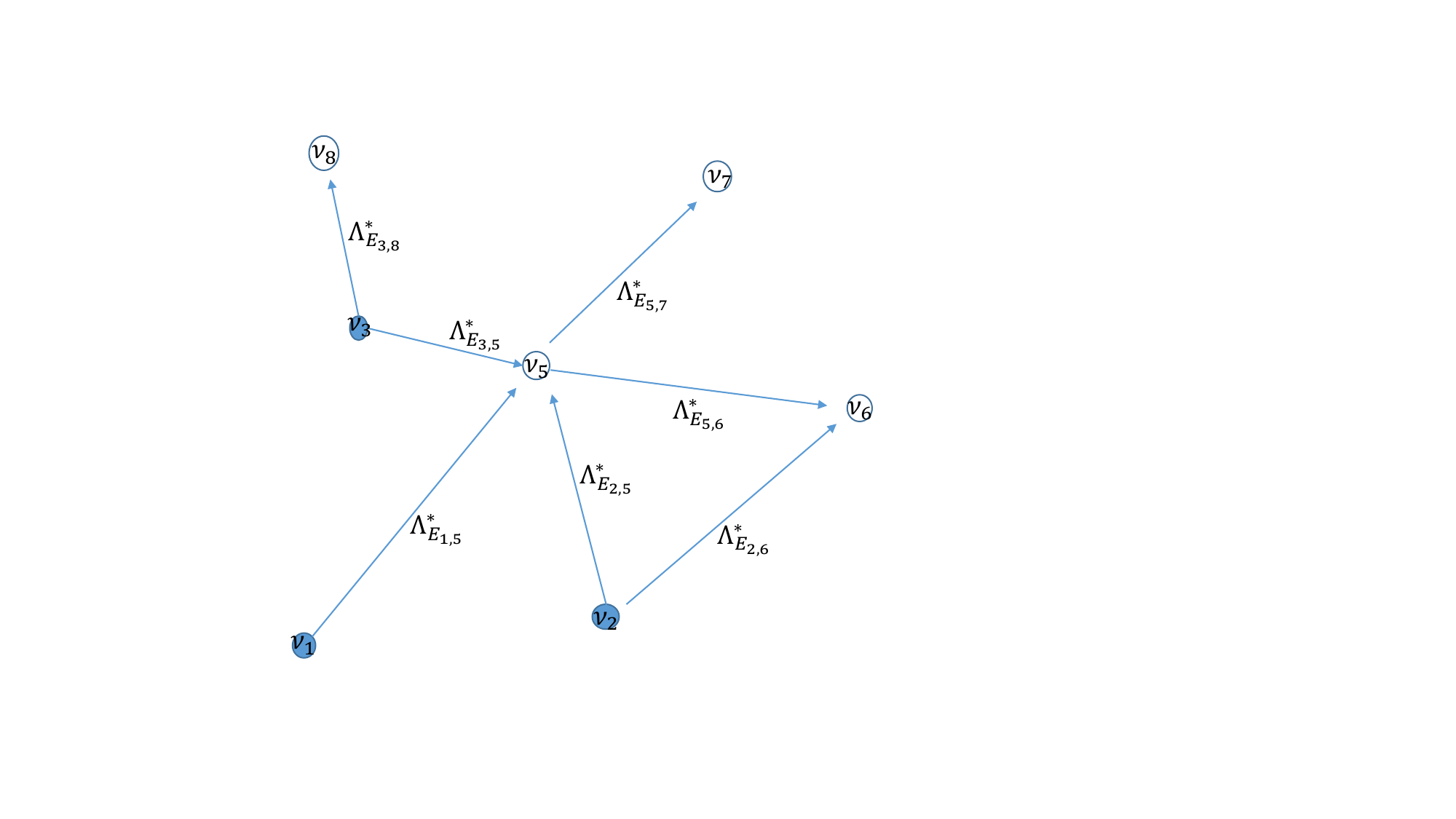}\\
  \caption{Transfer plan via a directed graph. Sources ($\bar{\nu}_i>0$) are filled circles while sinks ($\bar{\nu}_i<0$) are empty circles.
   (c.f. Definition \ref{def1}-(i)). Geodesic arcs: $(1\mapsto 5)= (1,5)$;
   \ $(1\mapsto 6)= (1,5,6)$, \ $(1\mapsto 7)= (1,5,7)$,\ $(1\mapsto 8)= (1,5,3,8)$, \ \ $(2\mapsto 5)= (2,5)$, \ $(2\mapsto 6)= (2,6)$, \ $(2\mapsto 7)= (2,5,7)$, \ $(2\mapsto 8)= (2,5,3,8)$, \ \
   $(3\mapsto 5)= (3,5)$,
   \ $(3\mapsto 6)= (3,5,6)$, \ $(3\mapsto 7)= (3,5,7)$, \ $(3\mapsto 8)= (3,8)$ ; \ \ \ \ \  Weighed arcs:
  $\Lambda^*_{E_{1,5}}=\lambda^*_{1,5}+\lambda^*_{1,6}+\lambda^*_{1,7}$, \ $\lambda^*_{E_{2,5}}=\lambda^*_{2,5}+\lambda^*_{2,7}$,
   \ $\Lambda^*_{E_{2,6}}=\lambda^*_{2,6}$, \
  $\Lambda^*_{E_{5,6}}=\lambda^*_{3,6}+\lambda^*_{1,6}$,
  $\Lambda^*_{E_{5,7}}=\lambda^*_{1,7}+\lambda^*_{2,7}+\lambda^*_{3,7}$ , \
  $\Lambda^*_{E_{3,5}}=\lambda^*_{3,5}+\lambda^*_{3,6}+\lambda^*_{3,7}$. \ \ It is assumed that $\bar{\nu}_3$ is large enough to supply $\bar{\nu}_8$, so $\lambda^*_{1,8}=\lambda^*_{2,8}=0$. Otherwise, the arrow $E_{3,5}$ should be reversed, and $\Lambda^*_{E_{1,5}}=\lambda^*_{1,5}+\lambda^*_{1,6}+\lambda^*_{1,7}+\lambda^*_{1,8}$, $\lambda^*_{E_{2,5}}=\lambda^*_{2,5}+\lambda^*_{2,7}+\lambda^*_{2,8}$, and $\Lambda^*_{E_{5,3}}=\lambda^*_{1,8}+\lambda^*_{2,8}$.  }
  \label{2}
\end{figure}
\begin{theorem}\label{Th2}
If $\infty> p > 1$  and $\mu, \nu:=\nu_+-\nu_-$ satisfy Assumption~\ref{ass1} then
$$ \lim_{\eps \searrow 0} \eps^{-1/p} W_p(\mu, \mu+\eps\nu)=\|\nu\|_\mu^{1/p}$$
\end{theorem}
\subsection{Proof of Theorem~\ref{Th2}}
We first state the inequality
$$\liminf_{\eps\searrow 0}\eps^{-1/p}W_p(\mu,\mu+\eps\nu)\geq\|\nu\|_\mu^{1/p} \ . $$
From the  principle of monotone additivity it is enough to prove
\be\label{ineq1}\liminf_{\eps\searrow 0}\eps^{-1/p}W_p(\mu+\eps\nu_+,\mu+\eps\nu_-)\geq\|\nu\|_\mu^{1/p} \ . \ee
Recall the dual formulation (\ref{dualp}, \ref{Cp}).
In fact, it is enough to restrict to $(\phi,\psi)\in{\cal C}_p(\text{supp} (\mu))\equiv {\cal C}_p(\cup A_i)$.  In the special case $\psi(x)=\phi(x):= z_i$ is a constant   over  $A_i$ we get
\be\label{2.2}W_p^p(\mu+\eps\nu_+, \mu+\eps\nu_-)\geq \eps\sum_{i\in\bar{V}} z_i\bar{\nu}_i\ee
provided $z_i-z_j\leq |x-y|^p$
 for any $x\in A_i, y\in A_j$.
 In particular, if
  $z_i-z_j\leq d(i,j)$
(see definition~\ref{def1}-(iii, v)).
  From (\ref{2.2}) and Definition~\ref{def1}-(ii) we get
\be\label{2.3} W_p^p(\mu+\eps\nu_+, \mu+\eps\nu_-)\geq \eps\sup_{\{z\}}\sum_{i\in\bar{V}} z_i\bar{\nu}_i\ee
where the supremum is on all possible values of $\{z_1\ \ldots , z_{\#\bar{V}}\}$ which satisfy $z_i-z_j \leq d(i,j)$ for any $i,j\in\bar{V}$. Since $d(\cdot, \cdot)$ is a metric on the graph $(V,E)$  via Definition~\ref{def1}-(v)  we recall the dual formulation of the metric Monge problem, or the so called Kantorovich Rubinstein Theorem (Theorem  1.14 in [V] or [R1]) in discrete version:
\be\label{2.4}\|\nu\|_\mu = \sup_{\{z\}}\sum_{i\in V} z_i\bar{\nu}_i \ \ ; \ \  z_i-z_j  \leq d(i,j) \  \ee
(see also Definition~\ref{def1}-(vi)).
Then (\ref{ineq1}) follows from (\ref{2.3}-\ref{2.4}).

To prove the opposite inequality
we need some additional definitions:
\begin{defi} .
\begin{enumerate}\label{def2}
 \item
Denote $Z_i^j\in A_i$ to be  the closest point in $A_i$  to $A_j$. (see Definition~\ref{def1}-(iii)).
\item  For $i,j\in V$ let
 $\bar{o}_{i,j}=(o^{(1)}_{i,j}\ldots o_{i,j}^{(n)})$,   a choice of an optimal orbit
 realizing (\ref{oo}) in Definition~\ref{def1}-(v) (note that there can be more than one such orbit, but we choose only one).
Let $|\bar{o}_{i,j}|$ be the cardinality of $\bar{o}_{i,j}$.

For any $l\in V$, denote $E^+_l$ the set if all outgoing  edges from $l$, that is, $E \in E^+_l$ iff, for some $i,j\in V$, $l\in \bar{o}_{i,j}=(o^{(1)}_{i,j}\ldots o_{i,j}^{(n)})$, $l\not= o_{i,j}^{(n)}$.
    \par
    Likewise, denote $E^-_l$ the set if all incoming edges to $l$, that is  $E\in E^-_l$ iff $l\in \bar{o}_{i,j}=(o^{(1)}_{i,j}\ldots o_{i,j}^{(n)})$, $l\not= o_{i,j}^{(1)}$.
\item \label{lambdaE}  For each $i,j\in V$, let
$$E_{\bar{o}_{i,j}}:= \{ E; E=E_{o_{i,j}^{(k)}, o_{i,j}^{(k+1)}} ; \ 1\leq k\leq |\bar{o}_{i,j}|-1\}\ . $$
where $\bar{o}_{i,j}$ is the above choice of optimal orbit. Let
\be\label{oiwai} \Lambda_E^*:= \sum_{\{ i,j\ ; E\in{E_{\bar{o}_{i,j}}}\}}\lambda^*_{i,j} \ , \ee
  see (\ref{mmk}) for $\lambda^*_{i,j}$. This is {\em the total flux} traversing $E$ due to the optimal transport plan.
  \par
  Note that
  \be\label{kirk}\sum_{E\in E_l^+}\Lambda_E^* -\sum_{E\in E_l^-}\Lambda_E^* =\bar{\nu}_l\ee
  for any $l\in V$. Recall (Definition \ref{def1} (i,ii)) that $\bar{\nu}_l>0$ if $l\in V_+$, $\bar{\nu}_l<0$ if $l\in V_-$ and $\bar{\nu}_l=0$ if $l\in V-\bar{V}$.
  \par
  Note also that the flux due to optimal plan is uni-directional, i.e  \\ $\Lambda^*_E\cdot\Lambda^*_{-E}=0$ for any edge $E$ (here $-E$ represents the same edge in the opposite orientation).
\item\label{nuplus}
  For $k\in V$
  \be\label{sumE++}\hat{\nu}^+_k:= \sum_{E_{k,i}\in  E^+_k}\Lambda^*_{E_{k,i}}\delta_{{Z_k^i}} \ . \ee
  Here $\delta_x$ is the Dirac delta function at $x$. In particular, $\hat{\nu}^+_k$ is supported in $A_k$ (see Definition \ref{def2} (1)), and
  \be\label{sumE+} |\hat{\nu}^+_k|=\sum_{E\in E^+_k}\Lambda^*_E \ . \ee
\item \label{hatmu}For $i,j\in V$, let $B_r(Z^j_i)$ be the ball of radius $r$ centered at $Z_i^j\in A_i$. Given $\eps>0$ let $r_i^{j,\eps}>0$ be the radius of the ball  such that $\mu\left(A_j\cap B_{r_i^{j,\eps}}(Z_i^j)\right)=\eps$. See Figure 3. \par
 \begin{figure}
\centering
\includegraphics[height=10.cm, width=14.cm]{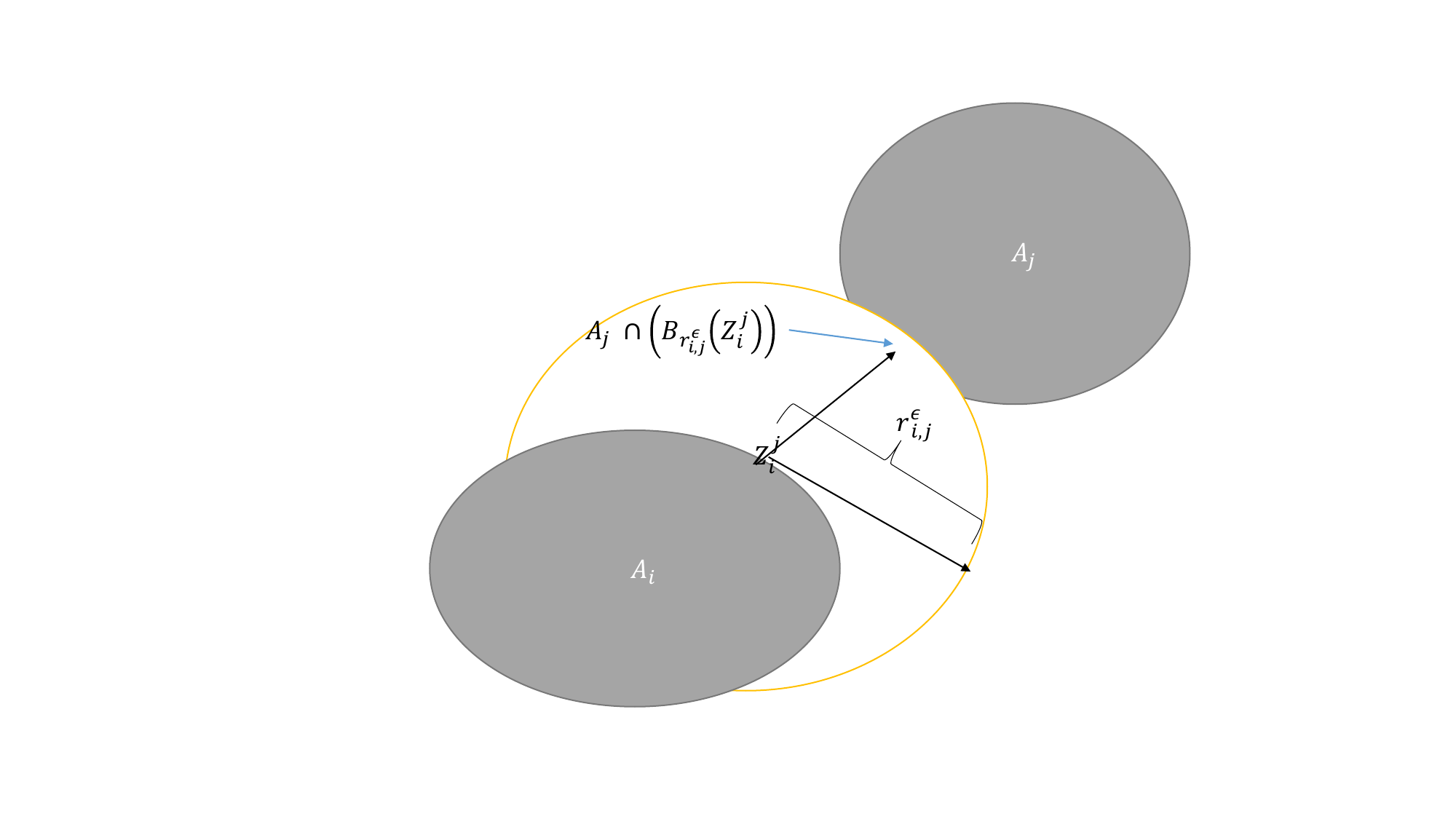}\\
  \caption{}\label{3}
\end{figure}
    Let $\hat{\mu}_{i,j}^\eps$ be the restriction of the measure $\mu$ to the set $A_j\cap B_{r_i^{j,\eps}}(Z_i^j)$  defined above.

\item\label{nu-} Let
\be\label{nu-eq} \hat{\nu}_k^-(\eps):=\sum_{E=E_{l,k}\in E^-_k} \hat{\mu}^{\eps\Lambda_E^*}_{l,k} \ .  \ee
In particular, $\hat{\nu}^-_k$ is supported in $A_k$  and
  \be\label{sumE-} |\hat{\nu}^-_k(\eps)|=\eps\sum_{E\in E^-_k}\Lambda^*_E \ . \ee


\item\label{nufinal} $\hat{\nu}_+:= \sum_{k\in V}\hat{\nu}_k^+$ \ \ ; \ \ $\hat{\nu}_-(\eps):= \sum_{k\in V}\hat{\nu}_k^-(\eps)$ \ \ ; \ \
$\hat{\nu}(\eps):= \eps\hat{\nu}_+-\hat{\nu}_-(\eps)$.

\end{enumerate}
\end{defi}
Note that $\hat{\nu}(\eps)\in{\cal M}_0$, i.e $\eps|\hat{\nu}_+|=|\hat{\nu}_-(\eps)|$. In fact, we obtain from (\ref{kirk}, \ref{sumE+}, \ref{sumE-}) that for each $k\in V$
\be\label{ballancek} \eps|\hat{\nu}^+_k|-|\hat{\nu}^-_k(\eps)|=\eps\bar{\nu}_k \ ,  \ee
and $\sum_{k\in V}\bar{\nu}_k=0$ (Definition \ref{def1}-(i)).
\par

Using the above we find
form the  metric property of $W_p$ and the triangle inequality
\be\label{chainineq}  W_p(\mu+\eps\nu,\mu)\leq  W_p(\mu+\eps\nu,\mu+ \hat{\nu}(\eps))+ W_p(\mu, \mu+ \hat{\nu}(\eps)) \ .  \ee
Let  $\mu_k$ be the restriction of $\mu$ to $A_k$, $\nu_k$  the restriction of $\nu$ to $A_k$ and $\hat{\nu}_k(\eps)=\eps\hat{\nu}^+_k-\hat{\nu}_k^-(\eps)$ . By (\ref{ballancek}) (recall $\bar\nu_k:=|\nu_k|$), $W_p(\mu_k+\eps\nu_k,\mu_k+ \hat{\nu}_k(\eps))$ is defined on each component.  We can use
the definition of Wasserstein metric to obtain
$$  W^p_p(\mu+\eps\nu,\mu+ \hat{\nu}(\eps))\leq \sum_{k\in V} W^p_p(\mu_k+\eps\nu_k,\mu_k+ \hat{\nu}_k(\eps)) \ . $$

   Now Theorem~\ref{firstT} applies  to each of the components of this sum. By the assumption of the Theorem we obtain
  $$W^p_p(\mu_k+\eps\nu^k_+, \mu_k+\eps\hat{\nu}^+_k)=O(\eps^{pq})=o(\eps) \  $$
  where $q>1/p$ by its definition. Thus, the first term on the right of (\ref{chainineq}) is controlled by $o(\eps^{1/p})$.
  \par
 To complete the proof we need to estimate the second term.
\begin{proposition}\label{prop1}
$$ W^p_p(\mu, \mu+\hat{\nu}(\eps))\leq \eps \|\nu\|_\mu + o(\eps) \ . $$
\end{proposition}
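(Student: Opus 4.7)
The plan is to combine the monotone additivity principle (Proposition~\ref{ma}) with an explicit edge-by-edge coupling driven by the optimal graph flow $\lambda_E^*$. First, observe that for $\eps$ small the finitely many balls $B_{r_l^{k,\eps\lambda_E^*}}(Z_l^k)$ that localize the pieces of $\hat{\nu}_k^-(\eps)$ become pairwise disjoint within each $A_k$, so $\hat{\nu}_-(\eps)\leq\mu$. Writing $\lambda:=\mu-\hat{\nu}_-(\eps)\geq 0$, both identities $\hat{\nu}_-(\eps)+\lambda=\mu$ and $\eps\hat{\nu}_++\lambda=\mu+\hat{\nu}(\eps)$ hold, so Proposition~\ref{ma} immediately gives
\[
W_p^p\bigl(\mu,\mu+\hat{\nu}(\eps)\bigr)\leq W_p^p\bigl(\hat{\nu}_-(\eps),\eps\hat{\nu}_+\bigr).
\]

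Next, I would build the coupling edge by edge: for every oriented edge $E=(l,k)$ with $\lambda_E^*>0$, pair the piece $\hat{\mu}_{l,k}^{\eps\lambda_E^*}$ of $\hat{\nu}_k^-(\eps)$ (supported in $A_k\cap B_r(Z_l^k)$ with $r:=r_l^{k,\eps\lambda_E^*}$) with the atom $\eps\lambda_E^*\delta_{Z_l^k}$ of $\eps\hat{\nu}_l^+$ through any coupling $\pi_E$ of these two equal-mass marginals. By Definitions~\ref{nuplus}--\ref{nu-} this indexing is bijective on the edge set, so $\pi:=\sum_E\pi_E$ is a valid coupling between $\hat{\nu}_-(\eps)$ and $\eps\hat{\nu}_+$. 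Since every point $y$ in the support of $\hat{\mu}_{l,k}^{\eps\lambda_E^*}$ satisfies $|y-Z_l^k|\leq r$, the sub-cost obeys
\[
\int|x-y|^p\,d\pi_E(x,y)\leq \eps\lambda_E^*\,r^p.
\]
The Ahlfors lower bound inherited from the strong $d$-connectedness of $\mu_k$ at the contact point $Z_k^l$ yields $r=\mathrm{dist}(A_l,A_k)+O(\eps^{1/d})$, and hence $r^p=|E|_{l,k}+O(\eps^{1/d})$.

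Finally, summing and swapping the order via the defining relation $\lambda_E^*=\sum_{\{i,j:E\in\bar{e}_{i,j}\}}\lambda_{i,j}^*$ of Definition~\ref{lambdaE}, together with (\ref{oo})--(\ref{mmk}), gives $\sum_E\lambda_E^*|E|_{l,k}=\sum_{i,j}\lambda_{i,j}^*|\bar{E}|_{i,j}=\|\nu\|_\mu$, so
\[
W_p^p\bigl(\hat{\nu}_-(\eps),\eps\hat{\nu}_+\bigr)\leq \eps\|\nu\|_\mu+O(\eps^{1+1/d})=\eps\|\nu\|_\mu+o(\eps),
\]
which chained with the first display concludes the proof. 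The main technical obstacle will be the sharp radius expansion $r=\mathrm{dist}(A_l,A_k)+O(\eps^{1/d})$: one must verify that the intersection $B_r(Z_l^k)\cap A_k$ contains enough $\mu$-mass in a cap of effective radius $r-\mathrm{dist}(A_l,A_k)$ around the closest point $Z_k^l\in A_k$, which is precisely what the strong $d$-connectedness of $\mu_k$ (hypothesis of Theorem~\ref{firstT}) delivers via the mapping $\Phi_j$ of Definition~\ref{strongdom}, and one must also ensure that no genuinely supra-linear contribution creeps into the per-edge cost when the endpoints of several edges incident to the same $A_k$ happen to coincide.
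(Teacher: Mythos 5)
Your proof is correct and essentially the paper's own argument: stripping off the common mass $\mu-\hat{\nu}_-(\eps)$ by monotone additivity and then coupling each atom $\eps\lambda_E^*\delta_{Z_l^k}$ with the cap measure $\hat{\mu}_{l,k}^{\eps\lambda_E^*}$ edge by edge is exactly the plan $\pi^\eps$ of (\ref{pieps}), whose diagonal part contributes zero cost, and the final re-summation over $\lambda_E^*$ is identical to (\ref{Wpleq})--(\ref{sof}). Your quantitative radius expansion $r=\mathrm{dist}(A_l,A_k)+O(\eps^{1/d})$ via Ahlfors regularity at $Z_k^l$ makes the paper's $o(\eps)$ in (\ref{sof}) explicit, and the overlap issue you flag (which could make $\hat{\nu}_-(\eps)\leq\mu$ fail when several incoming caps in the same $A_k$ intersect) is equally present in the paper's construction and is repaired the same way, by extracting disjoint sub-pieces of $\mu_k$ of the prescribed masses within the same slightly enlarged balls.
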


For the proof  we construct a transport plan $\pi$ from $\mu$ to $\mu+\hat{\nu}(\eps)$. To illustrate this construction by a particular example see the directed tree in Figure 2. A detailed description of the plan is given below.
\par
For any positive measure $\sigma\in {\cal M}_+(\Omega)$ and $x\in\Omega$ define
 \\ $\delta_x\otimes\sigma\in {\cal M}_+(\Omega\times\Omega)$ by its action on $\phi\in C(\Omega\times\Omega)$:
 $$ <  \delta_x\otimes\sigma,\phi>:= \int_\Omega\phi(x,y)d\sigma(y) \ . $$

Let \be\label{muminus}\mu_-^\eps:= \mu -\hat{\nu}_-(\eps)\ee
and $\pi_{\mu_-^\eps}$ be the diagonal lift of $\mu_-^\eps$ to ${\cal M}_+(\Omega\times\Omega)$, that is,
$$< \pi_{\mu_-^\eps},\phi>:=\int_{\Omega}\phi(x,x)d\mu_-^\eps(x) \ . $$
Let now
 \be\label{pieps}\pi^\eps:= \pi_{\mu_-^\eps}+ \sum_{l\in V}\sum_{k\in V}\delta_{Z_l^k} \otimes\hat{\mu}_{l,k}^{\eps\Lambda^*_{E_{l,k}}} . \ee
Note that some terms in the double sum above my be zero. This is the case if the edge $E_{l,k}$ does  not transverse  an orbit of the optimal transport plan, i.e $\Lambda^*_{E_{l,k}}=0$ (hence $\delta_{Z_l^k} \otimes\hat{\mu}_{l,k}^{\eps\Lambda^*_{E_{l,k}}} =0$).
\par
 Next, observe that $\pi^\eps\in \Pi(\mu+\hat{\nu}(\eps), \mu)$ (c.f (\ref{Pidef})). In fact, from  (\ref{muminus}) and (\ref{pieps}), for any $\phi=1(y)\psi(x)$
\begin{multline}<\pi^\eps,\phi>=\int_\Omega\psi(x) d \mu_-^\eps(x)+\eps\sum_{l\in V}\sum_{k\in V} \psi(Z_k^l)\Lambda^*_{E(k,l)}=\\
 \int_\Omega\psi(x) d \mu(x)-\int_\Omega\psi(x)d\hat{\nu}_-(\eps)(x)+\eps\int_\Omega \psi(x)d\hat{\nu}_+(x) =<\mu+\hat{\nu}(\eps),\psi> \  \end{multline}
 where we used $\hat{\nu}_+:= \sum_{k\in V}\hat{\nu}_k^+$ and (\ref{sumE++}).
 \par
  Setting now $\phi=1(x)\psi(y)$
\begin{multline}<\pi^\eps,\phi>=\int_\Omega\psi(y) d \mu_-^\eps(y)+\sum_{l\in V}\sum_{k\in V} \int_\Omega\psi(y)d\hat{\mu}_{l,k}^{\eps\Lambda^*_{E_{l,k}}} (y)=
 <\mu,\psi>  \  \end{multline}
 where we used $\hat{\nu}_-(\eps):= \sum_{k\in V}\hat{\nu}_k^-(\eps)$ and (\ref{muminus}, \ref{nu-eq}).

It then follows from (\ref{pieps}) that
\be\label{Wpleq} W_p^p(\mu, \mu+\hat{\nu}(\eps))\leq\int_\Omega\int_\Omega |x-y|^pd\pi_\eps=
 \sum_{l\in V}\sum_{E=E_{l,k}}\int_\Omega\int_\Omega |Z_l^k-y|^p \hat{\mu}_{l,k}^{\eps\Lambda^*_E}(dy)
 \ee
 From Definition~\ref{def2}-1,5 and Definition \ref{def1}-iii, we obtain
 $\int_\Omega |Z_l^k-y|^p \hat{\mu}_{l,k}^{\eps\Lambda^*_{E_{l,k}}}(dy)= \eps|E|_{l,k}\Lambda^*_E +o(\eps)$, so Definition \ref{def2}
 -6,7,  together with (\ref{oiwai}) imply
 \be\label{sof} \int_\Omega\int_\Omega |Z_l^k-y|^p \hat{\mu}_{l,k}^{\eps\lambda^*_E}(dy)= \eps|E|_{l,k}\sum_{(i,j); E_{l,k}\in \bar{e}_{i,j}}\lambda^*_{i,j}+o(\eps)\ee
 and  (\ref{Wpleq}, \ref{sof}, \ref{oo}) imply
 $$ W_p^p(\mu, \mu+\hat{\nu}(\eps))\leq \eps\sum_{i,j\in V\times V}\lambda_{i,j}^*d(i,j) +o(\eps)= \eps\|\nu\|_\nu+ o(\eps) \ . $$
\\
$\Box$

\newpage
\begin{center}References\end{center}
\begin{description}
\item{[AGS]} Ambrosio, L, Gigli, N and Sava\'{r}e, G.:{\it Gradient Flows in metric spaces and in the space of probability measures}, Lecture Notes in Mathematics, Birkhauser, 2005
\item{[AG1]}  Ambrosio, L and  Gigli, N: {\it A User's Guide to Optimal Transport} in Modelling and Optimisation
of Flows on Networks
Cetraro, Italy 2009,
B.Piccoli and M. Rascle Ed., Springer
\item{[BB]}  Benamou, J.D,  Brenier, Y.: {\it  A computational fluid mechanics solution to the Monge-Kantorovich mass transfer problem},  Numer. Math., 84, 375-393, 2000
\item{[K]} Kantorovich, L. V. {\it On the translocation of masses.} Dokl. Akad. Nauk. USSR 37 (1942), 199, 201.
English translation in J. Math. Sci. 133, no.4 (2006), 1381, 1382.
\item{[KR]} Kantorovich, L. V., and Rubinshtein, G. S. {\it On a space of totally additive functions.} Vestn.
Leningrad. Univ. 13, 7 (1958), 52,59.
\item{[J]} J\"{a}rvenp\"{a}\"{a}, E.,  J\"{a}rvenp\"{a}\"{a}, M., K\"{a}enm\"{a}ki, A.,
 Rajala, T.,   Rogovin, S and   Suomala, V.: {\it Packing dimension and Ahlfors regularity of porous sets in metric spaces}, Mathematische Zeitschrift
September 2010, Volume 266, Issue 1, pp 83-105
\item{[L]} S. Lisini, {\it Characterization of absolutely continuous curves in Wasserstein spaces},  Calc. Var. Partial Differential Equations, 28, (2007),  85-120
\item{[M]} McCann, R, J.:{\it  A convexity principle for interacting gases}, Adv. Math. 128 (1997), no. 1, 153-179.
\item{[O]} Otto, F. {\it The geometry of dissipative evolution equations: the porous medium equation.} Comm.
Partial Differential Equations 26, 1-2 (2001), 101,174.
\item{[R]}   Rachev, S.T, R\"{u}schendorf, L.R: \ {\it Mass
Transportation Problems}, Vol 1, Springer, 1998
\item{[T]} Tanaka, H. {\it An inequality for a functional of probability distributions and its application to Kac's
one-dimensional model of a Maxwellian gas.} Z. Wahrscheinlichkeitstheorie und Verw. Gebiete 27
(1973), 47, 52.
\item{[Va]} Vasershtein, L. N. {\it Markov processes over denumerable products of spaces describing large system
of automata}. Problemy Peredatci Informacii 5, 3 (1969), 64, 72.
\item{[V]} Villani, C: \ {\it Topics in Optimal Transportation},
Graduate Studies in Mathematics, Vol. 58, AMS
\item{[V1]} Villani, C: \ {\it Optimal Transport; Old and new}, Springer, 2009
\item{[Ve]} Vershik, A. M. {\it The Kantorovich metric: the initial history and little-known applications.} Zap.
Nauchn. Sem. S.-Peterburg. Otdel. Mat. Inst. Steklov. (POMI) 312, Teor. Predst. Din. Sist. Komb. i
Algoritm. Metody. 11 (2004), 69,85, 311.
\item{[W1]} Wolansky, G:{\it  Limit theorems for optimal mass transportation}, , Calc. Var. Partial Differential Equations 42 (2011), no. 3-4,
 487-516

\end{description}

\end{document}